
\documentclass[12pt]{article}

\usepackage{amstext    }
\usepackage{amsthm    }
\usepackage{a4}
\usepackage[mathscr]{eucal}
\usepackage{mathrsfs}
\usepackage{hyperref}

\usepackage{amsmath}
\usepackage{amssymb}
\usepackage{amscd}




\newtheorem{theorem}{Theorem}[section]
\newtheorem{definition}[theorem]{Definition}
\newtheorem{proposition}[theorem]{Proposition}
\newtheorem{corollary}[theorem]{Corollary}
\newtheorem{lemma}[theorem]{Lemma}
\newtheorem{remark}[theorem]{Remark}

\newcommand{\cali}[1]{\mathscr{#1}}

\newcommand{\dist}{{\rm dist}}

\newcommand{\ddbar}{{\partial\overline\partial}}

\newcommand{\lof}{\mathop{\mathrm{{log^\star}}}\nolimits}

\newcommand{\Cc}{\cali{C}}

\newcommand{\Fc}{\cali{F}}

\newcommand{\Lc}{\cali{L}}

\newcommand{\C}{\mathbb{C}}
\newcommand{\D}{\mathbb{D}}

\newcommand{\R}{\mathbb{R}}
\newcommand{\T}{\mathbb{T}}
\newcommand{\B}{\mathbb{B}}
\newcommand{\U}{\mathbb{U}}

\renewcommand{\P}{\mathbb{P}}


\title{Directed harmonic  currents near  hyperbolic singularities}

\author{ Vi{\^e}t-Anh Nguy{\^e}n}

\begin{document}

\maketitle

\begin{abstract}
 Let $\Fc$ be a  holomorphic  foliation by curves defined in a neighborhood of $0$ in $\C^2$
 having  $0$ as  a hyperbolic  singularity.
 Let $T$ be a  harmonic current  directed  by $\Fc$  which 
    does not give  mass  to any of the two  separatrices. 
Then we  show that the Lelong number  of $T$ at $0$ vanishes.
Next, we    apply this  local result   to investigate  the global   mass-distribution for directed  harmonic  currents on  singular holomorphic foliations  living on  compact  complex   surfaces.   
 Finally, we  apply  this global result  to study the  recurrence phenomenon of a generic  leaf.    \end{abstract}

\noindent
{\bf Classification AMS 2010:} Primary: 37F75,  37A30;  Secondary: 57R30.

\noindent
{\bf Keywords:} holomorphic  foliation,  hyperbolic singularity,  directed harmonic current,   Lelong number.


 \section{Introduction} \label{Intro}


While investigating  the  unique  ergodicity of harmonic currents  on singular holomorphic foliations in $\P^2,$ 
Forn\ae ss and Sibony in \cite[Corollary  2]{FornaessSibony3} have  established,  among other things, the following    remarkable  result.
  \begin{theorem}\label{thm_FS}{\rm (Forn\ae ss-Sibony \cite{FornaessSibony3})}
 Let $(M,\Fc,E)$    be  
a      hyperbolic foliation   with     the set of  singularities $E$ in a    compact complex surface $M.$ 
Assume that all the singularities  are hyperbolic and  that the foliation  has no invariant analytic  curve.
Then for every   harmonic  current $T$  directed  by $\Fc,$  its transverse measure is   diffuse,
that is,  $T$   gives no mass to each single  leaf.
 \end{theorem}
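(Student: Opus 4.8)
\noindent\emph{Proof proposal.}
I would argue by contradiction: suppose some leaf $L$ of $\Fc$ satisfies $\mathbf{1}_L T\neq 0$, and aim to deduce that $\overline L$ is an $\Fc$-invariant analytic curve, contradicting the hypothesis. The first move is to replace $T$ by the current it carries on $L$. On a flow box $\cali{U}\cong\D\times\tau$ disjoint from $E$, write the local decomposition of a directed harmonic current $T=\int_\tau [V_a]\,h_a\,d\mu(a)$ (plaques $V_a$, positive leafwise-harmonic densities $h_a$), and restrict $\mu$ to the at most countable set of parameters $a$ with $V_a\subset L$: this yields $S:=\mathbf{1}_L T$, again a nonzero positive harmonic current directed by $\Fc$, with $\supp S\subset\overline L$ and decomposing measure purely atomic and carried by the plaques of $L$. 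Since $T$, hence $S$, gives no mass to the two local separatrices at any singularity, and $L$ is not one of them, it now suffices to prove that $\overline L$ is analytic.

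The crucial point is to show that $\supp S$ avoids $E$. So suppose $p\in\overline L\cap E$ and fix linearizing coordinates $(x,y)$ at $p$ in which $\Fc$ is generated by $x\,\partial_x+\lambda y\,\partial_y$ with $\lambda\in\C\setminus\R$; the separatrices are $\{x=0\}$ and $\{y=0\}$. Parametrizing the leaf through $(x_0,y_0)$ by a half-plane via $x=e^{\zeta}$, $y=y_0 e^{\lambda(\zeta-\zeta_0)}$ shows that $L$ meets the transversal $\{x=x_0\}$ exactly in the geometric spiral $\{c\,\mu^{n}:n\ge n_0\}$ with $\mu=e^{2\pi i\lambda}$, $|\mu|\neq 1$, accumulating onto the separatrix $\{x=0\}$. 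An atom of mass $a_{n_1}>0$ of the decomposing measure of $S$ at $c\,\mu^{n_1}$ is carried by the holonomy around $\{x=0\}$ to atoms at all $c\,\mu^{n}$, $n\ge n_1$, and harmonicity ties the masses $a_n$ and densities $h_n$ of the corresponding plaques together through the holonomy cocycle. Because $|\mu|\neq1$, these plaques crowd exponentially against $\{x=0\}$; using finiteness of $\int_M T\wedge\omega$ together with Harnack and maximum-principle control of the positive harmonic densities $h_n$ on these shrinking plaques, I would show that the tail $\sum_{n\ge n_1}$ can carry finite mass only if the limit current lies on the separatrix $\{x=0\}$ --- impossible, since $S$ charges no separatrix. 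Hence $S$ vanishes on a neighborhood of $p$, and $\supp S$ is a closed subset of $M\setminus E$, laminated by leaves of $\Fc$ and still carrying the atom on $L$.

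It remains to close the argument away from the singularities. If a directed harmonic current of finite mass on a compact lamination of $M\setminus E$ has decomposing measure atomic and carried by a single leaf $L$, then $L$ is closed: otherwise $L$ accumulates on a proper closed invariant subset of $\supp S$, and the same holonomy-plus-harmonicity bookkeeping as above --- now at an ordinary recurrence point, with no singularity to linearize --- reproduces an infinite family of atoms whose comparable masses violate finiteness of $\int_M S\wedge\omega$; this is where the rigidity of directed harmonic currents (equivalently, the absence of atoms for ergodic harmonic measures whose generic leaf is non-closed) enters. Thus $L$ is a closed leaf, $\overline L=L\cup(\overline L\cap E)$ with $\overline L\cap E$ finite, and $\overline L$ has locally finite area near these points because $S$ has locally finite mass; by Remmert--Stein $\overline L$ is an analytic curve, and it is $\Fc$-invariant --- the contradiction sought. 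Therefore $T$ gives no mass to any single leaf.

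The principal obstacle is precisely the atom-propagation estimate of the second paragraph. For a \emph{closed} directed current the transverse measure is holonomy-invariant and the bookkeeping is immediate; for a genuinely \emph{harmonic} current the densities are only leafwise harmonic and are transformed by a nontrivial multiplicative cocycle, so one must control simultaneously the masses of the accumulating atoms and the boundary behaviour of the positive harmonic densities on the plaques collapsing onto a separatrix. This quantitative local study at a hyperbolic singularity --- the very mechanism that will later force the Lelong number of $T$ at $0$ to vanish --- is the delicate heart of the proof; the global steps above are comparatively routine.
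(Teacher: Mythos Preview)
The paper does not prove Theorem~\ref{thm_FS}; it is quoted from \cite{FornaessSibony3} (Corollary~2 there, stated for $\P^2$) with the remark that the argument extends to compact surfaces. Your outline --- restrict $T$ to its atomic part $S=h[L]$ on a single leaf, run a local analysis at each hyperbolic singularity in $\overline L$, and force $\overline L$ to be an invariant analytic curve --- is indeed the Forn\ae ss--Sibony strategy, so at the level of architecture there is nothing to compare.

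Two points in your sketch need attention. The first you flag yourself: the quantitative step showing that a positive harmonic $h$ on the spiralling leaf cannot have finite weighted mass near the singularity. Harnack along one turn of the holonomy gives only $h(a_{n+1})\geq c\,h(a_n)$ with a constant that may well be less than~$1$, so geometric decay of the atom masses is \emph{not} ruled out by soft arguments; one genuinely needs the Poisson representation of $\tilde h_\alpha$ on the sector $S_\lambda$ together with the weight $\|\psi'_\alpha\|^2\approx e^{-2\min\{v,bu+av\}}$, which is exactly the machinery of \cite{FornaessSibony3} and of Section~\ref{Main_estimates} of the present paper. The second you do not flag. The assertion ``$T$, hence $S$, gives no mass to the two local separatrices, and $L$ is not one of them'' is circular: $S=\mathbf{1}_L T$ charges precisely $L$, which may itself be a local separatrix at some $p\in E$, and that case must be treated separately (it is in fact the easy case, since $L$ is then locally analytic at $p$ and Remmert--Stein applies directly). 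Similarly, the third paragraph's claim that at a \emph{regular} recurrence point the infinitely many plaques of $L$ carry ``comparable masses'' is unjustified: those plaques are not related by iterates of a fixed holonomy germ, so Harnack along $L$ gives no uniform lower bound on $h$ across them, and the finiteness contradiction does not follow as written.
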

 In fact, the  original  version  of   Forn\ae ss-Sibony theorem  is only  formulated   for the case $M=\P^2.$
 However,
 their argument  still goes through (at least)  in the above general   context. On the  other hand,
 a  convenient  way to quantify   the density  of harmonic  currents  is to use  the
 notion of Lelong number introduced by Skoda \cite{Skoda}.
 Indeed, Theorem \ref{thm_FS} is    equivalent to  the  assertion that 
 the   Lelong  number of $T$ vanishes everywhere \underline{outside}    $E.$ 
Complementarily to  this  theorem,   
 the main  purpose  of the   present work  is   to investigate  
the mass-clustering  phenomenon  of  $T$    \underline{near} the set of singularities $E.$ 
Here is  our main  result which is  of local nature. 

\begin{theorem}\label{thm_main} {\rm (Main Theorem).} 
 Let $(\D^2,\Fc,\{0\})$    be  
a     holomorphic  foliation   on the unit bidisc $\D^2$  associated  to the linear vector field $\Phi(z,w) = \mu z {\partial\over \partial z}
+ \lambda w{\partial\over \partial w},$ where  $\lambda,\mu$ are some nonzero complex numbers  such that $\lambda/\mu\not\in \R.$  
Then for every   harmonic  current $T$  directed by $\Fc$  which does  not give mass to any of the two separatrices $(z=0)$ and $(w=0),$    the   Lelong  number of $T$ at $0$  vanishes.  
 \end{theorem}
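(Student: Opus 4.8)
The plan is to work in a logarithmic chart, describe $T$ leaf by leaf through positive harmonic functions on angular sectors, and show that the Lelong number is forced to vanish by the interplay of three facts: hyperbolicity, which makes the relevant leaf-domains \emph{genuine} sectors of opening angle $<\pi$; the finiteness of the mass of $T$; and the hypothesis that $T$ charges neither separatrix. More concretely: since $T$ puts no mass on $(z=0)$ or $(w=0)$, neither the mass distribution of $T$ nor its Lelong number at $0$ changes if we remove the separatrices, and it suffices to prove $\|T\wedge\beta\|(\D^2_r)=o(r^2)$ as $r\to0$ (working in a polydisc relatively compact in $\D^2$, where the mass of $T$ is finite). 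Each non-separatrix leaf $L$ of $\Fc$ is the injective immersion $\phi_C\colon\zeta\mapsto(e^{\mu\zeta},Ce^{\lambda\zeta})$ of $\C$ into $(\C^*)^2$ — injectivity being a consequence of $\lambda/\mu\notin\R$ — and $L\cap\D^2=\phi_C(W_C)$, with $W_C=\{\zeta:\Re(\mu\zeta)<0,\ \Re(\lambda\zeta)<-\log|C|\}$. Since $\mu$ and $\lambda$ are $\R$-linearly independent (again exactly because $\lambda/\mu\notin\R$), $W_C$ is a \emph{genuine} convex sector of opening angle $\theta_C\in(0,\pi)$; its two bounding rays correspond to $L$ spiralling into the two separatrices, while going to infinity into the interior of $W_C$ corresponds to $L$ tending to $0\in\C^2$. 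Crucially, $\phi_C^{-1}(\D^2_r)=W_C+\xi_r$ is a \emph{translate} of $W_C$, where $\xi_r$ is the unique solution of $\Re(\mu\xi_r)=\Re(\lambda\xi_r)=\log r$; as $r\to0$, $\xi_r$ goes to infinity along a fixed direction lying in the interior of the recession cone of $W_C$. This solvability is precisely where hyperbolicity enters, and it breaks down in the real-ratio cases.

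Using the local structure $T=\int h_C\,[L_C]\,d\nu(C)$, with $h_C>0$ harmonic on $L_C$, and the elementary facts that $\phi_C^*\beta=g_C(\zeta)\,i\,d\zeta\wedge d\bar\zeta$ with $g_C(\zeta)=\tfrac12(|\mu|^2|z|^2+|\lambda|^2|w|^2)$ comparable to $|\phi_C(\zeta)|^2$ and satisfying $g_C(\eta+\xi_r)=r^2g_C(\eta)$, a change of variables yields the clean identity
\[
\frac{\|T\wedge\beta\|(\D^2_r)}{r^2}=\int\Big(\int_{W_C}\widetilde h_C(\eta+\xi_r)\,g_C(\eta)\,i\,d\eta\wedge d\bar\eta\Big)\,d\nu(C),\qquad \widetilde h_C:=h_C\circ\phi_C .
\]
Thus everything reduces to showing that the right-hand side tends to $0$ as $\xi_r$ recedes into the interior of the recession cone.

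The heart of the argument is a dichotomy for the positive harmonic function $\widetilde h_C$ on the sector $W_C$, obtained from its Martin representation (a Phragm\'en--Lindel\"of analysis): the only component of $\widetilde h_C$ that does \emph{not} decay as one moves to infinity into the interior of $W_C$ is a nonnegative multiple of the minimal harmonic function $\rho^{\pi/\theta_C}\sin(\pi\psi/\theta_C)$. But the integral $\int_{W_C}\rho^{\pi/\theta_C}\sin(\pi\psi/\theta_C)\,g_C\,i\,d\zeta\wedge d\bar\zeta$ diverges (a short computation in polar coordinates near a bounding ray, using $\pi/\theta_C>1$), so this component would make the mass of $T$ near the separatrices infinite; hence it is absent for $\nu$-a.e.\ $C$, and $\widetilde h_C(\eta+\xi_r)\to0$ for every $\eta$. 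To push this through the double integral I would split $\D^2_r$ into the region where $\min(|z|,|w|)\ll r$ (a neighbourhood of the separatrices) and the complementary ``bulk'' near $\partial\D^2_r$: on the bulk $g_C\asymp r^2$ and $\phi_C^{-1}(\text{bulk})$ is a region of fixed size translated to infinity by $\xi_r$, so mean-value and Harnack estimates on $W_C$ together with dominated convergence against $\nu$ dispose of it; the near-separatrix region is handled using that $T$ charges neither separatrix, which forces the relevant ``transverse mass near $0$'' to tend to $0$. This gives $\|T\wedge\beta\|(\D^2_r)=o(r^2)$, and therefore the Lelong number of $T$ at $0$ vanishes.

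I expect the main obstacle to be this last limiting step, made quantitative and uniform in $C$ against $\nu$. Two kinds of concentration must be excluded, and they call on the two hypotheses separately. Concentration of $T$ toward the receding corners of the shrinking sectors is controlled by Harnack's inequality, but the estimates must be integrable against $\nu$; I expect this forces a bootstrapping inequality of the shape $\|T\wedge\beta\|(\D^2_r)\le C\,r^2\varepsilon_r+\|T\wedge\beta\|(\D^2_{r/2})$ with $\varepsilon_r\to0$, to be iterated over dyadic scales. Concentration toward the two separatrices near $0$ is subtler: the leaves there are spirals of infinite area and $g_C$ degenerates, so finiteness of the mass of $T$ does not by itself prevent it, and one genuinely needs the hypothesis ``no mass on the separatrices'' (equivalently, decay of the $h_C$'s toward the corresponding boundary rays after averaging in $C$); this is also the natural place to bring in the contracting/expanding holonomy around the separatrices, itself a manifestation of hyperbolicity. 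An alternative packaging of the final step, worth attempting in parallel, is a tangent-current argument: a positive Lelong number at $0$ would produce a nonzero, $\C^*$-dilation-invariant, $\Fc$-directed positive harmonic current with positive Lelong number at $0$, which — using that the original $T$ charges no separatrix — one would aim to rule out by analysing the transient $\C^*$-action induced on the space of leaves of the hyperbolic linear foliation.
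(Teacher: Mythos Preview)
Your setup matches the paper's: sectors $W_C$ of opening $\theta\in(0,\pi)$, the scaling $\phi_C^{-1}(\D^2_r)=W_C+\xi_r$ with $g_C(\eta+\xi_r)=r^2g_C(\eta)$ (a clean reformulation), and the observation that finite mass rules out the minimal-at-infinity component so that each $\tilde h_C$ is a pure Poisson integral of its boundary values and hence decays pointwise under translation into the sector. The paper records exactly this as Lemma~\ref{lem_FS} (from Forn\ae ss--Sibony), together with the crucial boundary integrability
\[
\int\!\!\int_{-\infty}^{\infty} \tilde H_\alpha(y)\,(1+|y|)^{1/\gamma-1}\,dy\,d\nu(\alpha)<\infty,\qquad \gamma=\pi/\theta>1,
\]
which encodes the finiteness of the mass of $T$ in flow boxes around the separatrices away from $0$.

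The gap is precisely the one you flag, and it is where \emph{all} the work lies. The weight $g_C$ is not integrable over $W_C$ (each leaf has infinite Euclidean area in the strips along the separatrices), so there is no ready dominant for $\tilde h_C(\eta+\xi_r)\,g_C(\eta)$; and a Harnack-type bound $\tilde h_C(\eta+\xi_r)\le C\,\tilde h_C(\eta_0)$ uniform in $r$ already fails for Poisson extensions of boundary point masses. The paper's resolution is to push the whole integral to the boundary: one obtains $G(r)\le c\int\!\int K_{-\log r}(y)\,\tilde H_\alpha(y)\,dy\,d\nu(\alpha)$, where $K_s$ is the Poisson kernel integrated against $e^{2s-2\min\{v,t\}}$ over $\{v,t\ge s\}$, and then proves (Proposition~\ref{prop_main_estimate_1})
\[
\frac{K_s(y)}{(1+|y|)^{1/\gamma-1}}\ \le\ c\,\min\Big\{1,\ \big[(1+|y|)^{1/\gamma}/s\big]^{\gamma-1}\Big\}.
\]
This supplies simultaneously the uniform dominant (against the finite measure displayed above) and the pointwise decay as $s\to\infty$; its proof is several pages of case-by-case Poisson-kernel estimates in sector coordinates, and nothing in your outline substitutes for it.

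Two specific problems with your proposed substitutes. The dyadic inequality $F(r)\le Cr^2\varepsilon_r+F(r/2)$ is not a genuine reduction: the shell $\D^2_r\setminus\D^2_{r/2}$ is, in the $(v,t)$-coordinates, an L-shape with two \emph{unbounded} arms along the separatrix directions, so Harnack on your bounded ``bulk'' does not cover it, and establishing $\varepsilon_r\to0$ on those arms is the original difficulty. Second, you over-read the hypothesis that $T$ gives no mass to the separatrices: in the paper it is invoked exactly once, to justify dropping the two separatrix leaves from the decomposition of $T$; it says nothing about mass \emph{near} the separatrices inside $\D^2_r$. That control comes entirely from the boundary integrability above, i.e.\ from finite mass away from $0$, not from the separatrix hypothesis.
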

 Note that  the hypothesis on the linear vector field means that $0$ is  an isolated  hyperbolic  singularity of the foliation
(see, for example, the recent survey \cite{FornaessSibony2}).
In order to prove Theorem \ref{thm_main}  we are inspired by   the approach   of Forn\ae ss--Sibony in \cite{FornaessSibony2,FornaessSibony3}
which is  based on integral formulas. Indeed,
the nature of the holonomy  maps   associated to a hyperbolic   singularity 
   permits    to use  Poisson   representation  formula
for harmonic functions on  leaves associated  to a given harmonic current  near the  singularity. 
 Therefore, we  are led to  analyze the behavior of some singular integrals at the  infinity,
i.e. when the leaves get close to the  separatrices.
Using delicate estimates on Poisson kernel,   we are able to handle these singular integrals.

Combining  Theorem \ref{thm_main} and Theorem \ref{thm_FS}, we obtain the  following result which  gives   a rather complete picture
of  the mass-distribution   of directed harmonic  currents in dimension $2.$ 
 \begin{theorem}\label{thm_main_global}
Let $(M,\Fc,E)$    be  
a      hyperbolic foliation   with      the set of  singularities $E$ in a    compact complex surface $M.$ 
Assume that all the singularities  are hyperbolic and  that the foliation  has no invariant analytic  curve.
Then for every   harmonic  current $T$  directed by $\Fc,$   the   Lelong  number of $T$ vanishes everywhere   in $M.$  
 \end{theorem}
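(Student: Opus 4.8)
The plan is to reduce everything to the two results already in hand: outside the singular set we quote Theorem \ref{thm_FS}, and at each singular point we quote Theorem \ref{thm_main} after a local linearization. So fix a harmonic current $T$ directed by $\Fc$. At a point $p\in M\setminus E$ the foliation is, in suitable local coordinates, a holomorphic fibration by discs, and a directed harmonic current is there a superposition of integration currents over the plaques weighted by a positive harmonic function; as already recalled in the Introduction, for such $p$ the vanishing of the Lelong number of $T$ at $p$ is \emph{equivalent} to the diffuseness of the transverse measure of $T$. Hence Theorem \ref{thm_FS} immediately gives $\nu(T,p)=0$ for every $p\in M\setminus E$, and only the points of $E$ remain.

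Let $p\in E$. Since $p$ is a hyperbolic singularity, the linear part of a local defining vector field at $p$ has eigenvalues $\lambda,\mu$ with $\lambda/\mu\notin\R$. This has two consequences: $0$ lies in the Poincar\'e domain, and there is no resonance relation between the eigenvalues, because a relation $\lambda=m\lambda+n\mu$ with integers $m,n\ge 0$, $m+n\ge 2$ would force $\lambda/\mu\in\mathbb{Q}\subset\R$. By Poincar\'e's linearization theorem the vector field, hence the foliation, is holomorphically linearizable near $p$: there is a biholomorphism $\varphi$ from a neighbourhood $U$ of $p$ onto $\D^2$ with $\varphi(p)=0$ carrying $\Fc|_U$ to the linear foliation associated with $\Phi(z,w)=\mu z\,\partial_z+\lambda w\,\partial_w$ (after a diagonal rescaling, which leaves $\Phi$ unchanged, one may arrange the image to be exactly $\D^2$). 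Set $T':=\varphi_*\big(T|_U\big)$; since positivity, $\ddc$-closedness and directedness are all local and preserved by biholomorphisms, $T'$ is a harmonic current on $\D^2$ directed by the linear foliation.

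It remains to verify the last hypothesis of Theorem \ref{thm_main}, namely that $T'$ charges neither separatrix. The punctured separatrices $(z=0)\setminus\{0\}$ and $(w=0)\setminus\{0\}$ are, under $\varphi^{-1}$, relatively compact pieces of two leaves $L_1,L_2$ of $\Fc$ in $M$. By Theorem \ref{thm_FS} the transverse measure of $T$ is diffuse, so $T$ gives no mass to $L_1$ nor to $L_2$; transporting this by $\varphi$, the current $T'$ gives no mass to $(z=0)$ nor to $(w=0)$. Theorem \ref{thm_main} now applies to $T'$ and yields $\nu(T',0)=0$. Since the Lelong number of a positive (in particular of a directed harmonic) current at a point does not depend on the choice of local holomorphic coordinates, $\nu(T,p)=\nu(T',0)=0$. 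As $p\in E$ was arbitrary, combining this with the case $p\notin E$ shows that the Lelong number of $T$ vanishes at every point of $M$, which is the assertion of Theorem \ref{thm_main_global}.

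The genuinely substantial ingredient is of course Theorem \ref{thm_main} itself; in the argument above the only points requiring some care are the faithful transfer of the hypotheses under restriction to $U$ and push-forward by $\varphi$ — in particular that ``no mass on a leaf of $\Fc$'' becomes ``no mass on a separatrix of the local model'' — together with the coordinate-independence of the Lelong number for the currents under consideration.
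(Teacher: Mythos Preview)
Your proof is correct and follows essentially the same route as the paper: the regular case is dispatched via Theorem~\ref{thm_FS} (the paper spells this out with Harnack and the local decomposition, while you invoke the equivalence stated in the Introduction), and at a singular point one linearizes, uses Theorem~\ref{thm_FS} to rule out mass on the separatrices, and applies Theorem~\ref{thm_main}. Your appeal to Poincar\'e's linearization theorem is superfluous under the paper's conventions---in Section~\ref{Background} a hyperbolic singularity is \emph{defined} to be linearizable with $\lambda/\mu\notin\R$---but harmless.
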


The  above  theorem  and  a result by   Glutsyuk \cite{Glutsyuk} and Lins Neto  \cite{Neto} gives us the following corollary.
It can be   applied  to  every generic  foliation in $\P^2$ with a given degree $d>1.$
  
\begin{corollary}\label{cor_th_main}
 Let $(\P^2,\Fc,E)$    be  
a   singular  foliation by Riemann surfaces on the complex projective plane $\P^2.$ Assume that
all the singularities  are hyperbolic and  that  $\Fc$ has no invariant algebraic  curve. 
Then for every   harmonic  current $T$  directed by $\Fc,$   the   Lelong  number of $T$ vanishes everywhere   in $\P^2.$ 
\end{corollary}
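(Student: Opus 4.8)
The plan is to deduce Corollary \ref{cor_th_main} directly from Theorem \ref{thm_main_global} by verifying its hypotheses for the foliation $(\P^2,\Fc,E)$ at hand. Theorem \ref{thm_main_global} requires three things: that $(M,\Fc,E)$ is a hyperbolic foliation on a compact complex surface, that \emph{all} singularities are hyperbolic, and that the foliation carries no invariant analytic curve. The first point is immediate since $\P^2$ is a compact complex surface and $\Fc$ is a singular foliation by Riemann surfaces whose singularities are all hyperbolic; hyperbolicity of the leaves (in the sense of being covered by the disc) is automatic here because a singular holomorphic foliation on $\P^2$ whose singularities are all non-degenerate and which admits no invariant algebraic curve has all leaves hyperbolic — this is the content of the work of Glutsyuk \cite{Glutsyuk} and Lins Neto \cite{Neto}, which is precisely the input cited in the sentence preceding the corollary. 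The second point is assumed verbatim in the statement.

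The only real step is therefore the passage from ``no invariant \emph{algebraic} curve'' (the hypothesis of the corollary) to ``no invariant \emph{analytic} curve'' (the hypothesis of Theorem \ref{thm_main_global}). First I would invoke the classical fact that any closed positive current on $\P^2$ which is directed by $\Fc$, or any irreducible analytic subset of $\P^2$ invariant under $\Fc$, is in fact algebraic: a purely $1$-dimensional analytic subset of the compact projective variety $\P^2$ is automatically algebraic by Chow's theorem. Hence an $\Fc$-invariant analytic curve in $\P^2$ would be an $\Fc$-invariant algebraic curve, contradicting the hypothesis. So the ``no invariant analytic curve'' condition is equivalent, on $\P^2$, to the stated ``no invariant algebraic curve'' condition, and Theorem \ref{thm_main_global} applies.

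Once the hypotheses are checked, the conclusion of Theorem \ref{thm_main_global} — that the Lelong number of every directed harmonic current $T$ vanishes everywhere in $M$ — transfers word for word with $M=\P^2$, giving exactly the assertion of the corollary. I would also add the remark, to justify the ``generic foliation of degree $d>1$'' comment, that for a generic $\Fc$ of degree $d>1$ both genericity conditions hold: Jouanolou's theorem guarantees that a generic foliation of degree $d\ge 2$ has no invariant algebraic curve, and the non-degeneracy/hyperbolicity of all singularities is likewise a generic (Zariski-open and dense) condition on the space of degree-$d$ foliations, so the corollary is non-vacuous.

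The main — and essentially only — obstacle is the algebraicity step, i.e.\ making sure that ruling out invariant \emph{algebraic} curves genuinely rules out invariant \emph{analytic} curves; but on the projective plane this is just Chow's theorem applied to the $1$-dimensional case, so there is no serious difficulty. Everything else is a direct citation: Theorem \ref{thm_main_global} for the Lelong-number vanishing, and Glutsyuk--Lins Neto for the hyperbolicity of all leaves.
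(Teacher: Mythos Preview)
Your proposal is correct and follows the same route as the paper: the paper offers no explicit proof beyond the sentence ``The above theorem and a result by Glutsyuk \cite{Glutsyuk} and Lins Neto \cite{Neto} gives us the following corollary,'' and you have spelled out precisely this deduction --- invoking Glutsyuk--Lins Neto for the hyperbolicity of the leaves, Chow's theorem to pass from ``no invariant algebraic curve'' to ``no invariant analytic curve,'' and then applying Theorem~\ref{thm_main_global}. Your added remarks on genericity (Jouanolou, etc.) go a bit beyond what the paper states but are accurate and in the same spirit.
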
  
It is  worthy noting that  under the  hypothesis of Corollary \ref{cor_th_main} there is  a unique harmonic  current $T$ of mass $1$
directed by $\Fc.$ Indeed, this is  a  consequence of  Forn\ae ss-Sibony theorem  on the  unique  ergodicity of harmonic  currents
(see Theorem 4 in \cite{FornaessSibony3}).

 As  an application  of  our results 
 we will  study  the problem  of  leaf recurrence. This problem  asks  
how  often   the  leaf  $L_a$ of  a point $a,$ which is generic  with respect to a directed harmonic  current $T,$ visits  the neighborhood of a given  point $x.$  Our approach    to  this question is to apply  a  geometric  Birkhoff ergodic theorem which has recently been obtained
in our joint-work 
  with Dinh and  Sibony  \cite{DinhNguyenSibony1}.   The theorem  permits  us to define, using the leafwise Poincar\'e metric, an indicator which measures
  the frequency of a generic leaf visiting  a small ball near a given point in terms of the radius of the ball. This, combined  Theorem \ref{thm_main_global}, gives us
  an upper estimate on the frequency outside and near  singularities  (see Theorem \ref{thm_visibility} below).

The article  is  organized as  follows. In   Section \ref{Background}   below  we  set up the  background 
of the article.
 Next, we develop our main  estimates in   Section  \ref {Main_estimates} which  are the core of the  work.
The proof  of  Theorem \ref{thm_main} and Theorem \ref{thm_main_global} will be  provided   in Section \ref{Proofs}.
The   recurrence phenomenon  of  a generic leaf will be  studied in  Section \ref{section_application}.
The  article  is concluded  with some  remarks and  open questions.
  
  \smallskip
  
\noindent {\bf Notes added in proof. } If, in Theorem \ref{thm_main_global}, we assume in addition that
$M$ is a projective surface, then  our recent  work  \cite[integrability condition (1.1)] {Nguyen} provides
the following  estimate:
$$
\int_X   {T\wedge \omega(x)\over  (\dist(x,E))^2 \log^*\dist(x,E)}<\infty.
$$
Here $\omega$ is a Hermitian metric on $X$ and $\dist$  is the  distance on $M$ induced  by $\omega,$
and $\log^*(\cdot):=1+|\log(\cdot)|$ is a log-type function.
   This   inequality is  much  more  difficult to obtain  than  the vanishing  of Lelong number  of $T$  at  singularities
   established in    Theorem \ref{thm_main_global}.
 

 \section{Background} \label{Background}

 
Let $M$ be a complex   surface.  A {\it   holomorphic foliation (by Riemann surfaces)}   $(M,\Fc)$ on $M$  is  the  data of  a {\it foliation  atlas} with charts 
$$\Phi_p:\U_p\rightarrow \B_p\times \T_p.$$
Here, $\T_p$ and $\B_p$  are  domains in $\C,$   $\U_p$ is  a  domain in 
$M,$ and  
$\Phi_p$ is  biholomorphic,  and  all the changes of coordinates $\Phi_p\circ\Phi_q^{-1}$ are of the form
$$x=(y,t)\mapsto x'=(y',t'), \quad y'=\Psi(y,t),\quad t'=\Lambda(t).$$

The open set $\U_p$ is called a {\it flow
  box} and the Riemann surface $\Phi_p^{-1}\{t=c\}$ in $\U_p$ with $c\in\T_p$ is a {\it
  plaque}. The property of the above coordinate changes insures that
the plaques in different flow boxes are compatible in the intersection of
the boxes. Two plaques are {\it adjacent} if they have non-empty intersection.
 
A {\it leaf} $L$ is a minimal connected subset of $M$ such
that if $L$ intersects a plaque, it contains that plaque. So a leaf $L$
is a  Riemann surface  immersed in $M$ which is a
union of plaques. A leaf through a point $x$ of this foliation is often denoted by $L_x.$
  A {\it transversal} is  a Riemann surface  immersed in  $X$  which is  transverse to  the leaves  of $\Fc.$

A {\it    holomorphic foliation   with singularities} is  the data
$(M,\Fc,E)$, where $M$ is a complex   surface, $E$ a closed
subset of $M$ and $(M\setminus E,\Fc)$ is a  holomorphic  foliation. Each point in $E$ is  said to be  a {\it  singular point,} and   $E$ is said to be {\it the  set of singularities} of the foliation. We always  assume that $\overline{M\setminus E}=M$, see  e.g. \cite{DinhNguyenSibony1, FornaessSibony1,FornaessSibony2}  for more details.
 A leaf $L$  of the  foliation is  said to be  {\it hyperbolic} if
it  is a   hyperbolic  Riemann  surface, i.e., it is  uniformized   by 
the unit disc $\D.$   
  The   foliation   is  said to be {\it hyperbolic} if  
  its leaves   are all  hyperbolic.

  Consider a   holomorphic foliation    $(M,\Fc,E)$ with  a discrete set of 
singularities $E$ on a complex surface $M.$ We say that a
singular point $x\in E$ is {\it linearizable} if 
there is a (local) holomorphic coordinates system of $M$ on an open neighborhood $\U_x$ of $x$ on which
$(\U_x,x)$ is identified with  $(\D^2,0)$ and 
 the
leaves of $(M,\Fc,E)$ are, under this  identification, integral curves of a  linear vector field
$\Phi(z,w) = \mu z {\partial\over \partial z}
+ \lambda w{\partial\over \partial w}$  with some nonzero complex numbers $\lambda,\mu.$   
Such   neighborhood  $\U_x$ is  called 
a {\it singular flow box} of  $x.$  Moreover, we say that a
linearizable singular point $x\in E$ is {\it hyperbolic} if 
  $\lambda/\mu\not\in\R.$ 

Let $\Cc_\Fc$ (resp. $\Cc^{1,1}_\Fc$) denote the  space   of functions   (resp. forms of bidegree $(1,1)$) defined  on
leaves  of the foliations and  compactly supported  on $M\setminus E$ which are  leafwise  smooth
 and transversally continuous.  A form $\alpha \in \Cc^{1,1}_\Fc$ is  said to be {\it positive} if its restriction to every plaque
 is  a  positive $(1,1)$-form in the  usual  sense.
 \begin{definition}\label{D:harmonic_current}\rm
A {\it   harmonic current $T$   directed by the foliation $\Fc$} (or equivalently, a {\it  directed harmonic current $T$ on  $\Fc$})   is  a  linear continuous  form
    on $\Cc^{1,1}_\Fc$ which  verifies the following two conditions:
 \begin{enumerate}
\item [(i)]
$i\ddbar T=0$ in the  weak sense, that is,  $T(i\ddbar f)=0$ for all $f\in  \Cc_\Fc,$  where in the  expression $i\ddbar f,$
we only consider  $\ddbar$ along the leaves;   
\item[(ii)] 
$T$ is  positive, that is, $T(\alpha)\geq 0$ for all positive forms $\alpha\in \Cc^{1,1}_\Fc$. 
\end{enumerate}
\end{definition}

Let $\U\simeq \B\times\T$ be a  flow box. By identifying  $\T$  with a fiber of the natural projection
of $\U$ onto $\B,$
 we may regard  $\T$ as a transversal.
Then a harmonic  current $T$ in $\U$  can be decomposed  as 
\begin{equation}\label{eq_local_description_intro}
T=\int_{\alpha\in\T} h_\alpha[P_\alpha] d\nu(\alpha),
\end{equation}
where, $\nu$ is  a positive measure on $\T,$  and for $\nu$-almost every $\alpha\in\T,$  
  $P_\alpha $ is  the plaque  in $\U$ passing through $\alpha$ and 
  $h_\alpha$ denotes the harmonic function associated to the current $T$ on  $P_\alpha.$ 

Recall from Skoda \cite{Skoda} that  the Lelong number  of $T$ at  a  point $x\in M$ is
 \begin{equation}\label{eq_Lelong}
 \mathcal L(T,x):= \lim_{r\to 0+}{1\over \pi r^2}\int_{B(x,r)} T\wedge i \ddbar\|y\|^2,
 \end{equation}
 where we identify, through a biholomorphic change of coordinates, a neighborhood
of $x$ in $M$ with an open neighborhood  of $0$ in  $\C^2,$ and $B(x,r)$ is thus identified with 
the Euclidean ball  with center $0$ and radius $r.$  In fact, the Lelong number $\mathcal L(T,x)$ is  independent of 
the choice of local  coordinates.  
  The reader can find a more general notion (Dinh-Sibony cohomology classes) recently introduced and studied in \cite{DinhSibony}.

 In this  work the letters $c,$ $c',$ $c'',$ $c_0,$  $c_1,$ $c_2$ etc. denote  positive constants, not necessarily the same at each  occurrence.  The notation $\gtrsim$ and $\lesssim$ means inequalities  up to  a  multiplicative constant, whereas  we  write  $\approx$ when  both inequalities  are satisfied.


 \section{Main estimates} \label{Main_estimates}


 We keep  the hypotheses of Theorem \ref{thm_main}.
 Suppose without loss of generality that
the foliation $\Fc$ is  defined on the bidisc of radius $2,$ i.e, $(2\D)^2$
in place  of $\D^2$  and that the constant $\mu$ is  equal to $1.$ Let $\Lc$ be  the foliation in $\C^2$  associated to the vector field $\Phi(z,w) = z {\partial\over \partial z}
+ \lambda w{\partial\over \partial w}$  with some complex number $\lambda = a + ib,$ $b \not= 0.$ So  $\Lc=\Fc$  on $(2\D)^2.$ 
Note that
if we flip $z$ and $w,$ we replace $\lambda$ by $1/\lambda = \bar\lambda/|\lambda|^2 = a/(a^2 + b^2) - ib/(a^2 + b^2).$  Therefore, we
may assume without loss of generality  that  $b > 0.$ 
   We now
describe a general leaf of $\Lc$.
There are two separatrices, $(w = 0),$ $ (z = 0).$ Other than that a leaf $L$
of $\Lc$  is  equal  to $L_{(1,\alpha)}=:L_\alpha$ for some $\alpha\in\C\setminus\{0\}.$  
Following  \cite[Section 2]{FornaessSibony3}
$L_\alpha$ can be
parametrized by 
\begin{equation}\label{eq_parametrization_DS}
(z,w) = \psi_\alpha(\zeta),\ z = e^{i(\zeta+(\log {|\alpha|})/b)},\ \zeta = u+iv,\ w = \alpha e^{i\lambda(\zeta+(\log {|\alpha|})/b)},
\end{equation}
because  $\psi_\alpha( -\log {|\alpha|}/b)=(1,\alpha).$
 Setting $t:=bu+av,$ we have that 
\begin{equation}\label{eq_u,v_vs_z,w}
|z| = e^{-v}, |w| = e^{-bu-av}=e^{-t}.
\end{equation}
Observe that as we follow $z$ once counterclockwise around the origin, $u$ increases
by $2\pi$, so the absolute value of $|w|$ decreases by the multiplicative factor of $e^{-2\pi b}.$
Hence, we cover all leaves of $\Fc|_{\D^2}$ by restricting the values of $\alpha$ so that $e^{-2\pi b }= |\alpha| < 1.$ We
notice that with the above parametrization, the intersection with the unit bidisc $\D^2$
of the leaf is given by  the  domain $\{ (u,v)\in\R^2:\ v > 0, u > -av/b\}.$ 
The main   point of this special parametrization is that  the above domain  is  independent of $\alpha.$ In the $(u, v)$-plane this domain
corresponds to a sector $ S_\lambda$ with corner at $0$ and given by $0 < \theta < \arctan(-b/a)$
where the $\arctan$ is chosen to have values in $(0, \pi).$ Let $\gamma := {\pi\over
\arctan(-b/a)} .$  It is  important to note that  $\gamma > 1.$  Then the
map 
\begin{equation}\label{eq_u,v_vs_U,V}
\phi : \tau=u+iv \mapsto \tau^\gamma=(u+iv)^\gamma =:U+iV
\end{equation}
 maps this sector to the upper half plane with coordinates $(U, V ).$

The local leaf clusters on both separatrices. To investigate the clustering on the
$z$-axis, we use a transversal $\T_{z_0} := \{(z_0,w): |w| < 1\}$ for some $|z_0| = 1.$ We
can normalize so that $h_\alpha(z_0,w) = 1$ where $(z_0,w)$ is the point on the local leaf
with $e^{-2\pi b} \leq |w| < 1.$ So $(z_0,w) = \psi_\alpha(\zeta_0) = \psi_\alpha(u_0 + iv_0)$ with $v_0 = 0$ and
$0 < u_0 \leq 2\pi$ determined by the equations $|z_0| = e^{-v_0} = 1$ and $e^{-2\pi b} \leq |w| =
e^{-bu_0-av_0} < 1.$ 
Let $T$ be  a  harmonic  current of mass $1$  directed by $\Fc.$ 
Let $\U$ be a  flow box which   admits  $\T_{z_0}$ as a transversal.
Then by  (\ref{eq_local_description_intro}) we can write in $\U$
\begin{equation}\label{eq_local_description}
T=\int h_\alpha[P_\alpha] d\nu(\alpha),
\end{equation}
where, for $\nu$-almost  every  $\alpha$ satisfying $ e^{-2\pi b}\leq |\alpha|\leq 1,$   $h_\alpha$ denotes the harmonic function associated to the current $T$ on the plaque  $P_\alpha $ which is  contained  in the leaf $L_\alpha.$ We still denote by $h_\alpha$
its harmonic continuation along $L_\alpha.$
 Define $\tilde h_\alpha(\zeta) :=h_\alpha \left (
e^{i(\zeta+(\log |\alpha|)/b)}, \alpha e^{i\lambda (\zeta+(\log |\alpha|)/b)}\right)$
on $S_\lambda.$  Consider the harmonic  function  $\tilde H_\alpha:=\tilde h_\alpha\circ \phi^{-1}$ defined  in the upper half plane  $\{  U+i V:\  V>0\},$
where $\phi$ is given in (\ref{eq_u,v_vs_U,V}).  Recall  the following result from \cite{FornaessSibony3}.
\begin{lemma} \label{lem_FS}
 The harmonic  function $\tilde H_\alpha $ is the Poisson integral of its boundary values.
So in the upper half plane  $\{  U+iV:\  V>0\}$,
$$
\tilde H_\alpha(U+iV)={1\over\pi} \int_{-\infty}^\infty \tilde H_\alpha( y){V\over V^2+( y-U)^2} d y
$$ for $\nu$-almost every $\alpha.$ Moreover,
$$
\int_{e^{-2\pi b}\leq |\alpha|\leq 1}\int_{-\infty}^\infty\tilde H_\alpha( y)(1+ | y|)^{1/\gamma -1}d y d\nu(\alpha)<\infty.
$$
\end{lemma}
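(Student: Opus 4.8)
\medskip

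\noindent\textbf{Proof proposal.}\ The plan is to extract both statements from three ingredients: $\tilde h_\alpha$ is nonnegative (positivity of $T$); $\tilde h_\alpha$ is harmonic not just on $S_\lambda$ but on a neighbourhood of $\overline{S_\lambda}$ in $\C$ (since $T$ lives on the larger bidisc $(2\D)^2$, the only singular point being $\tau=\infty$); and $T$ has finite mass near $0$. The map $\phi(\tau)=\tau^\gamma$ from (\ref{eq_u,v_vs_U,V}) sends the sector $S_\lambda$ conformally onto $\{V>0\}$, so $\tilde H_\alpha=\tilde h_\alpha\circ\phi^{-1}$ is nonnegative and harmonic there and the Herglotz representation gives
\[
\tilde H_\alpha(U+iV)=c\,V+\frac1\pi\int_{-\infty}^\infty\frac{V}{V^2+(y-U)^2}\,d\sigma_\alpha(y)
\]
for some $c=c(\alpha)\ge 0$ and a positive measure $\sigma_\alpha$ with $\int_{-\infty}^\infty(1+y^2)^{-1}\,d\sigma_\alpha(y)<\infty$. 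Since $\phi^{-1}$ is conformal across $\R\setminus\{0,\infty\}$ while $\tilde h_\alpha$ is harmonic past the two edges of $S_\lambda$ and bounded near the corner $\tau=0$, the function $\tilde H_\alpha$ is continuous up to $\R\setminus\{\infty\}$; a singular part of $\sigma_\alpha$, or an atom at $0$, would force a non-tangential limit $+\infty$ at some point of $\R$, which is impossible. Thus $\sigma_\alpha$ is absolutely continuous and equals $\tilde H_\alpha(y)\,dy$, where $\tilde H_\alpha(y):=\lim_{V\to0^+}\tilde H_\alpha(y+iV)$.

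Next I show $c(\alpha)=0$ for $\nu$-a.e.\ $\alpha$. With $\omega:=\frac i2(dz\wedge d\bar z+dw\wedge d\bar w)$, the parametrization (\ref{eq_parametrization_DS})--(\ref{eq_u,v_vs_z,w}) gives $\psi_\alpha^*\omega=g\,du\wedge dv$ on $S_\lambda$, where $g(u+iv):=e^{-2v}+|\lambda|^2e^{-2(bu+av)}$ is independent of $\alpha$; hence by (\ref{eq_local_description}) and the finiteness of the mass of $T$ near $0$,
\[
\int_{e^{-2\pi b}\le|\alpha|\le1}\Big(\int_{S_\lambda}\tilde h_\alpha\,g\,du\,dv\Big)\,d\nu(\alpha)<\infty,
\]
so $\int_{S_\lambda}\tilde h_\alpha\,g\,du\,dv<\infty$ for $\nu$-a.e.\ $\alpha$. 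But $\tilde h_\alpha(\tau)=\tilde H_\alpha(\tau^\gamma)\ge c\,\Im(\tau^\gamma)$, and on the region $\{\,u>u_\ast,\ 0<v<1\,\}\subset S_\lambda$ one has $g\ge e^{-2}$ and $\Im(\tau^\gamma)$ comparable to $v\,u^{\gamma-1}$, whence $\int_{S_\lambda}\Im(\tau^\gamma)\,g\,du\,dv=\infty$ because $\int_{u_\ast}^\infty u^{\gamma-1}\,du=\infty$ (here $\gamma>1$ is essential). So $c(\alpha)=0$ a.e., which is the first assertion.

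For the weighted estimate I push the mass integral to the half plane. Since $du\wedge dv=\gamma^{-2}|\eta|^{2/\gamma-2}\,dU\wedge dV$ under $\eta=\tau^\gamma$, writing $\tilde g:=g\circ\phi^{-1}$, Fubini together with the Poisson representation established above yields
\[
\gamma^2\int_{S_\lambda}\tilde h_\alpha\,g\,du\,dv=\int_{\{V>0\}}\tilde H_\alpha\,\tilde g\,|\eta|^{2/\gamma-2}\,dU\,dV=\int_{-\infty}^\infty\tilde H_\alpha(y)\,\Kc(y)\,dy,
\]
where
\[
\Kc(y):=\frac1\pi\int_{\{V>0\}}\frac{V\,\tilde g(U+iV)\,|U+iV|^{2/\gamma-2}}{V^2+(y-U)^2}\,dU\,dV
\]
does not depend on $\alpha$. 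Integrating over $\alpha$, the left side becomes a bounded multiple of the mass of $T$ near $0$, hence is finite; so the estimate of the lemma will follow once one proves the pointwise lower bound $\Kc(y)\ge c_0\,(1+|y|)^{1/\gamma-1}$ for some $c_0>0$.

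This last bound is the heart of the matter, and I expect it to be the main obstacle. For bounded $|y|$ it is clear. For $y\to+\infty$ (the case $y\to-\infty$ being analogous and slightly easier, since $\tilde g\ge|\lambda|^2$ near that edge) one keeps only the part of the integral over the box $\{\,|U-y|\le y/2,\ 0<V\le c_1\,y^{(\gamma-1)/\gamma}\,\}$: there $|\eta|\asymp y$; one checks $\Im(\eta^{1/\gamma})\lesssim 1$ on this box, so that the anisotropic weight $\tilde g(\eta)=g(\eta^{1/\gamma})$ has not yet started its exponential decay and $\tilde g(\eta)\gtrsim 1$; the Poisson kernel integrates in $U$ to a quantity $\gtrsim 1$ for each admissible $V$; and the $V$-integration supplies the width $y^{(\gamma-1)/\gamma}$. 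Multiplying the three factors, $\Kc(y)\gtrsim y^{2/\gamma-2}\cdot y^{(\gamma-1)/\gamma}=y^{1/\gamma-1}$, which is exactly the exponent needed. The delicate point is to determine sharply the width of the region on which $\tilde g$ is still of order $1$ and to balance it against the two exponential decay directions of $g$ and the Jacobian exponent $2/\gamma-2$; too crude an accounting yields a non-optimal power, whereas it is precisely $1/\gamma-1$ that forces the vanishing of the Lelong number later on. Ruling out the term $c\,V$ is the other delicate step, and it is there that the hyperbolicity — in the form $\gamma>1$ — is used.
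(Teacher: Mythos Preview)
Your argument is correct and self-contained, whereas the paper's own proof is essentially a citation of Proposition~1 and Remark~1 in \cite{FornaessSibony3} together with a one-line geometric explanation. The underlying idea in \cite{FornaessSibony3} is more direct than yours: on the edge $v=0$ one has $y=u^\gamma$, so $(1+|y|)^{1/\gamma-1}\,dy\approx du$, and the weighted boundary integral becomes an \emph{unweighted} integral of $\tilde h_\alpha$ along the rays $\partial S_\lambda$; this in turn is controlled by the mass of $T$ on the sequence of disjoint regular flow boxes that the leaf crosses as it winds around each separatrix at distance $\approx 1$ from the origin (each winding has $u$-length $2\pi$, and Harnack makes $\tilde h_\alpha$ essentially constant on each). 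The linear term $cV$ is likewise killed by Harnack growth estimates along the leaf. So the paper's route never needs your kernel $\Kc(y)$ or the full mass on $\D^2$; it only uses the mass on a tubular neighbourhood of the two separatrices away from $0$.

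Your route has its own merits: it is entirely internal to the half-plane picture, uses only the total mass $\int_{S_\lambda}\tilde h_\alpha\,g\,du\,dv$, and the kernel lower bound $\Kc(y)\gtrsim(1+|y|)^{1/\gamma-1}$ is a clean computation that mirrors (and is in some sense dual to) the upper bounds on $K_s(y)$ that the paper proves in Proposition~\ref{prop_main_estimate_1}. Your treatment of the Herglotz representation---absolute continuity of $\sigma_\alpha$ from continuity of $\tilde H_\alpha$ up to $\R$, and elimination of $cV$ by a divergence argument---is also more explicit than what the paper records.

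One small inaccuracy: in your step ruling out $cV$, the divergence $\int_{u_\ast}^\infty u^{\gamma-1}\,du=\infty$ holds for every $\gamma>0$, so this particular step does not use $\gamma>1$. Hyperbolicity (i.e.\ $\gamma>1$) becomes essential only later, in the decay of $K_s(y)/(1+|y|)^{1/\gamma-1}$ as $s\to\infty$ (Proposition~\ref{prop_main_estimate_1}).
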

\begin{proof}
The lemma  follows  from  Proposition 1  and Remark 1 in \cite{FornaessSibony3}.
The finiteness of the  integral   follows  from the finiteness of the
total  mass of the  harmonic  currents on the  disjoint flow boxes  crossed when  we follow
a path  around   the two separatrices, but away from  the singularity $0.$   
 \end{proof}

For $0<r<1,$ 
let
\begin{equation}\label{eq_F}
F(r):=  \int_{B_r}  T\wedge i \ddbar \|x\|^2,
\end{equation} 
where $B_r$ denotes the ball  centered at $0$ with radius $r$ in $\C^2.$ 
Consider also the function
\begin{equation}\label{eq_G}
G(r):=
{1\over r^2}F(r).
\end{equation}
By Skoda  \cite{Skoda}, $G(r)$  decreases as  $r\searrow 0,$  and  $\lim_{r\to 0} G(r)$ is the Lelong number $\mathcal L(T,0)$ of $T$ at $0.$
On the other hand, for each $s>0$ consider  two domains 
$$D^*_s:=\{(u,v)\in S_\lambda: \min\{v,bu+av\}\geq  s\}\ \text{and}\  D_s:=\{ (t,v)\in\R^2:\  \min\{t,v\}\geq  s \},$$ 
 and the  function $K_s:\   \R\to\R^+$ given  by
\begin{equation}\label{eq_K_s}
K_s(y):= \int_{D^*_s}{e^{2s-2 \min\{v, bu+av\}  }V\over V^2+( y-U)^2} dudv={1\over b} \int_{D_s}{e^{2s-2 \min\{v, t\}  }V\over V^2+( y-U)^2} dtdv,\qquad y\in \R.
\end{equation}
Here the last equality holds since  $t=bu+av$  by (\ref{eq_u,v_vs_z,w}).

  In what follows the letters $c,$ $c',$  $c_1,$ $c_2$ etc. denote  positive constants, not necessarily the same at each  occurrence.
  For two positive-valued  functions  $A$ and $B,$  we  write $A\approx B$ if there is a constant $c$ such that $c^{-1}A\leq B\leq cA.$ 
\begin{lemma}\label{lem_estimate_G}
There is a constant $c>0$  such that for every $0<r<1,$  we have 
$$
G(r)\leq c\int_{e^{-2\pi b}\leq |\alpha|\leq 1}\Big(\int_{-\infty}^\infty 
K_{-\log r}(y)\tilde H_\alpha( y)
d y \Big)d\nu(\alpha).
 $$
\end{lemma}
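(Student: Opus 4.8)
The plan is to express the Lelong-number integrand $F(r)=\int_{B_r}T\wedge i\ddbar\|x\|^2$ in terms of the leafwise decomposition (\ref{eq_local_description}) and then transport the computation to the upper half plane via the parametrizations (\ref{eq_parametrization_DS}) and (\ref{eq_u,v_vs_U,V}). Writing $T=\int h_\alpha[P_\alpha]\,d\nu(\alpha)$ and using that $[P_\alpha]\wedge i\ddbar\|x\|^2$ is (up to the harmonic weight $h_\alpha$) the pullback of the Euclidean area form to the leaf $L_\alpha$, one gets
$$
F(r)=\int_{e^{-2\pi b}\le|\alpha|\le 1}\Big(\int_{L_\alpha\cap B_r}h_\alpha\,\psi_\alpha^*(i\ddbar\|x\|^2)\Big)\,d\nu(\alpha)
$$
plus a contribution from the parts of $T$ supported on other leaves/flow boxes, which can be absorbed once one checks that only leaves with $e^{-2\pi b}\le|\alpha|\le 1$ meet a small ball around $0$ in the relevant way. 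First I would compute $\psi_\alpha^*(i\ddbar\|x\|^2)$ in the $(u,v)$ coordinates: since $|z|=e^{-v}$ and $|w|=e^{-t}$ with $t=bu+av$ by (\ref{eq_u,v_vs_z,w}), one has $\|x\|^2=e^{-2v}+e^{-2t}$, so $i\ddbar\|x\|^2$ pulled back is a positive combination of $e^{-2v}$ and $e^{-2t}$ times $du\wedge dv$, i.e. comparable to $(e^{-2v}+e^{-2t})\,du\,dv$.

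Next I would identify the region $L_\alpha\cap B_r$ in the $(u,v)$-plane. Since $\|x\|^2=e^{-2v}+e^{-2t}$, the condition $\|x\|<r$ forces both $e^{-v}<r$ and $e^{-t}<r$, i.e. $v>-\log r$ and $t>-\log r$; conversely $\min\{v,t\}\ge -\log r$ implies $\|x\|^2\le 2r^2$. Thus up to a bounded factor $L_\alpha\cap B_r$ is the domain $D^*_{s}$ with $s=-\log r$ (after switching between the $(u,v)$ and $(t,v)$ descriptions, which contributes the Jacobian $1/b$ exactly as in (\ref{eq_K_s})). On this domain the integrand $e^{-2v}+e^{-2t}$ is comparable to $e^{-2\min\{v,t\}}=e^{-2\min\{v,t\}}$, which produces the weight $e^{2s-2\min\{v,t\}}$ appearing in $K_s$ once we divide by $r^2=e^{-2s}$ to pass from $F(r)$ to $G(r)=r^{-2}F(r)$. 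Now I would invoke Lemma \ref{lem_FS}: $\tilde h_\alpha=\tilde H_\alpha\circ\phi$ is the Poisson integral of its boundary values, so substituting
$$
h_\alpha(\psi_\alpha(u+iv))=\tilde h_\alpha(u+iv)=\tilde H_\alpha(U+iV)=\frac1\pi\int_{-\infty}^\infty\tilde H_\alpha(y)\frac{V}{V^2+(y-U)^2}\,dy
$$
and then exchanging the order of integration (Fubini, justified by positivity of $\tilde H_\alpha$ and of the Poisson kernel), the inner $(u,v)$-integral over $D^*_s$ of $\frac{e^{2s-2\min\{v,t\}}V}{V^2+(y-U)^2}$ is by definition $K_s(y)$ with $s=-\log r$.

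Assembling these pieces gives exactly
$$
G(r)\le c\int_{e^{-2\pi b}\le|\alpha|\le 1}\Big(\int_{-\infty}^\infty K_{-\log r}(y)\tilde H_\alpha(y)\,dy\Big)\,d\nu(\alpha),
$$
where the constant $c$ collects the bounded comparison factors ($\|x\|^2\le 2r^2$ vs. $=r^2$, the ratio between $e^{-2v}+e^{-2t}$ and $e^{-2\min\{v,t\}}$, the $\frac1\pi$ from Poisson, the $\frac1b$ Jacobian, and the coefficients in $i\ddbar\|x\|^2$). The step I expect to be the main obstacle is making rigorous the reduction from $F(r)=\int_{B_r}T\wedge i\ddbar\|x\|^2$ to the leafwise integral over the single flow box $\U$ carrying the transversal $\T_{z_0}$: a priori $T$ near $0$ decomposes over several flow boxes, and one must argue that the ball $B_r$, for small $r$, is covered (up to a bounded overlap) by plaques of leaves $L_\alpha$ with $e^{-2\pi b}\le|\alpha|\le 1$, with the harmonic weights $h_\alpha$ normalized consistently — this is where the geometry of the parametrization (\ref{eq_parametrization_DS}) and the observation that following $z$ once around $0$ multiplies $|w|$ by $e^{-2\pi b}$ are used. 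The Fubini exchange and the passage between the $(u,v)$- and $(t,v)$-coordinates are routine once this reduction is in place; everything else is the bounded-constant bookkeeping already signalled by the use of $\approx$ and the letter $c$ in the statement.
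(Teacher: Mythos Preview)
Your proposal is correct and follows essentially the same route as the paper: write $F(r)$ as a leafwise integral via (\ref{eq_local_description}) and (\ref{eq_parametrization_DS}), bound the pulled-back area form by $\|\psi'_\alpha(\zeta)\|^2\approx e^{-2\min\{v,t\}}$, enclose the region $\{\|\psi_\alpha(\zeta)\|\leq r\}$ in $D^*_{-\log r}$, apply the Poisson representation from Lemma~\ref{lem_FS}, and use Fubini together with $r^{-2}=e^{2s}$ to recognize $K_{-\log r}(y)$. The ``obstacle'' you flag is already dispatched by the paper's setup preceding the lemma: since $T$ gives no mass to the separatrices and every other leaf in $\D^2$ is $L_\alpha$ for a unique $\alpha$ with $e^{-2\pi b}\le|\alpha|<1$, and since $h_\alpha$ is declared to be the harmonic continuation along all of $L_\alpha$, the decomposition (\ref{eq_local_description}) together with the global parametrization (\ref{eq_parametrization_DS}) already represents $T$ on all of $\D^2$, not just on a single flow box.
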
  
\begin{proof}
Using (\ref{eq_local_description}), (\ref{eq_F}) and the parametrization (\ref{eq_parametrization_DS}), and the  assumption  that $T$ does not give mass to any
of the two separatrices  $(z=0)$ and  $(w=0),$   we have, for $0<r<1,$ that
 $$
F(r)= \int_{e^{-2\pi b}\leq |\alpha|\leq 1} \int_{\zeta\in S_\lambda:\  \|\psi_\alpha(\zeta)\|\leq r} h_\alpha(\psi_\alpha(\zeta))  \|\psi'_\alpha(\zeta)\|^2 i d\zeta\wedge d\bar\zeta d\nu(\alpha).
$$
On the other hand, we infer  from (\ref{eq_u,v_vs_z,w}) that $\|(z,w)\|=\|\psi_\alpha(\zeta)\|\leq r$ implies $\min\{v, bu+av\}\geq  -\log r.$
Moreover,  using (\ref{eq_parametrization_DS}) and (\ref{eq_u,v_vs_z,w}) again, we get that
 $$\|\psi'_\alpha(\zeta)\|=\sqrt{|z|^2+|\lambda w|^2}\leq  (1+|\lambda|)e^{-\min\{v, bu+av\}}.
$$
Consequently,
 $$
F(r)\leq  c\int_{e^{-2\pi b}\leq |\alpha|\leq 1} \int_{(u,v)\in D^*_{-\log r}} h_\alpha(\psi_\alpha(u+iv))e^{-2\min\{v, bu+av\}}    d udv d\nu(\alpha).
$$
  Writing $U+iV=(u+iv)^\gamma$  as in (\ref{eq_u,v_vs_U,V}), an application of  Lemma \ref{lem_FS}  yields
  that
  $$
  h_\alpha(\psi_\alpha(u+iv))={1\over\pi} \int_{-\infty}^\infty \tilde H_\alpha( y){V\over V^2+( y-U)^2} d y
$$ for $\nu$-almost every $\alpha.$ 
  Inserting this  into the last estimate for $F(r)$ and taking (\ref{eq_G}) into account and  writing $r^{-2}=e^{-2\log r},$ the lemma follows.
\end{proof}

The  next  lemma  studies  the behavior of the Poisson kernel ${V\over V^2+( y-U)^2}$ in terms of $u$ and  $v.$ 
\begin{lemma}\label{lem_Poisson_kernel}
There are constants $c_1, c_2,c_3 >1$ large enough  with $c_3>c_2$ such that the following properties hold
for all $(u,v)\in\R^2$ with $\min\{v, bu+av\}\geq 1.$
\\
1)  $$  {1\over c_1}\leq {(\max\{v, bu+av\})^\gamma\over \sqrt{V^2+U^2}}\leq c_1\ \text{and}\   {1\over c_1}\leq {(\max\{v, bu+av\})^{\gamma-1}\min\{v, bu+av\} \over V}\leq c_1 .  $$
\\
2)  If  $\max\{v, bu+av\}\geq  c_2(1+|y|)^{1/\gamma},$ then
$$
{1\over c_1}  {\min\{v, bu+av\}\over (\max\{v, bu+av\})^{\gamma+1}}\leq {V\over V^2+( y-U)^2}\leq  c_1  {\min\{v, bu+av\}\over (\max\{v, bu+av\})^{\gamma+1}}.
$$
3)  If $\max\{v, bu+av\}\leq  c_2^{-1}(1+|y|)^{1/\gamma},$ 
then
$$
{1\over c_1}  {  V\over (1+|y|)^2}\leq {V\over V^2+( y-U)^2}\leq  c_1  { V\over (1+|y|)^2}.
$$
4)  If $ c_2^{-1}   (1+|y|)^{1/\gamma}\leq v, bu+av\leq  c_2(1+|y|)^{1/\gamma},$
then
$${1\over c_1}  { 1\over (1+|y|)}\leq {V\over V^2+( y-U)^2}\leq  c_1  { 1\over (1+|y|)}.$$
5)  If $ \min\{v, bu+av\}\leq c_3^{-1}   (1+|y|)^{1/\gamma}$ and  
$c_2^{-1}   (1+|y|)^{1/\gamma}\leq  \max\{v, bu+av\}  \leq  c_2(1+|y|)^{1/\gamma},$
then
$$
 {1\over c_1}\leq {V\over V^2+( y-U)^2}: {(1+|y|)^{1/\gamma-1}\min\{v, bu+av\}   \over (\min\{v, bu+av\} )^2+  (\max\{v, bu+av\} -\rho )^2 } \leq  c_1  ,
$$
where   $\rho$ is a  real number which depends only on $y$ and  $ \min\{v, bu+av\},$ and
which satisfies 
$ c_2^{-1}   (1+|y|)^{1/\gamma}\leq \rho\leq  c_2(1+|y|)^{1/\gamma}.$
\end{lemma}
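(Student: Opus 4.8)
The plan is to transport the estimate onto the conformal map $w=\tau^\gamma$ of the sector $S_\lambda$ onto the upper half plane, and then to control the Poisson kernel $V/(V^2+(y-U)^2)$ regime by regime. First I would write $\tau=u+iv=\rho_0 e^{i\theta}$ with $\rho_0:=\sqrt{u^2+v^2}$ and $\theta\in(0,\theta_0)$, $\theta_0:=\arctan(-b/a)$; since $\gamma\theta_0=\pi$, the map $\phi$ of (\ref{eq_u,v_vs_U,V}) reads $w=\tau^\gamma=\rho_0^\gamma e^{i\gamma\theta}$, so that $U=\rho_0^\gamma\cos(\gamma\theta)$, $V=\rho_0^\gamma\sin(\gamma\theta)$ and $\sqrt{U^2+V^2}=\rho_0^\gamma$. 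Two elementary geometric observations drive everything: since $v=\rho_0\sin\theta$ and $bu+av=\rho_0(b\cos\theta+a\sin\theta)$, and since $\theta\mapsto\max\{\sin\theta,\,b\cos\theta+a\sin\theta\}$ is bounded away from $0$ and from $\infty$ on $[0,\theta_0]$, one has $\max\{v,bu+av\}\approx\rho_0$; and since $\sin\theta$ vanishes to first order at $\theta=0$ while $b\cos\theta+a\sin\theta$ vanishes to first order at $\theta=\theta_0$ (with nonzero derivative there), one has $\min\{v,bu+av\}\approx\rho_0\,\min\{\theta,\theta_0-\theta\}$. Together with $\sin(\gamma\theta)\approx\min\{\gamma\theta,\pi-\gamma\theta\}=\gamma\min\{\theta,\theta_0-\theta\}$ this gives $V\approx\rho_0^{\gamma-1}\min\{v,bu+av\}$ and $\sqrt{U^2+V^2}=\rho_0^\gamma\approx(\max\{v,bu+av\})^\gamma$, which is exactly part 1.

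Parts 2--4 are then bookkeeping on the size of $\sqrt{U^2+V^2}=\rho_0^\gamma\approx(\max\{v,bu+av\})^\gamma$ relative to $1+|y|$, once $c_2$ is fixed large in terms of $\gamma$ and the constant of part 1. In case 2 the hypothesis forces $1+|y|$ to be a small fraction of $\sqrt{U^2+V^2}$, so $|y-U|\le|y|+|U|$ is a small fraction of $\sqrt{U^2+V^2}$ and $V^2+(y-U)^2\approx U^2+V^2\approx(\max\{v,bu+av\})^{2\gamma}$; combining with part 1 gives the claim. In case 3 the hypothesis forces $\sqrt{U^2+V^2}$ to be a small fraction of $1+|y|$; moreover $\min\{v,bu+av\}\ge 1$ forces $\max\{v,bu+av\}\ge 1$, hence $1+|y|\ge c_2^{\gamma}$ and $|y|$ is large, so $|y-U|\ge|y|-|U|\approx 1+|y|$, while $V\le\sqrt{U^2+V^2}\ll 1+|y|$, whence $V^2+(y-U)^2\approx(1+|y|)^2$. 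In case 4 both $v$ and $bu+av$ are $\approx(1+|y|)^{1/\gamma}$, so $\rho_0\approx(1+|y|)^{1/\gamma}$ and $\theta$ stays bounded away from $0$ and $\theta_0$; thus $\sin(\gamma\theta)\approx 1$, $V\approx\rho_0^\gamma\approx 1+|y|$, and $V^2+(y-U)^2\approx(1+|y|)^2$, giving $V/(V^2+(y-U)^2)\approx 1/(1+|y|)$.

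The core of the proof is case 5, the critical annulus near a separatrix. Here $\max\{v,bu+av\}\approx(1+|y|)^{1/\gamma}$, so $\sqrt{U^2+V^2}=\rho_0^\gamma\approx 1+|y|$ and $|U|\approx 1+|y|$ (indeed $|\cos(\gamma\theta)|$ is close to $1$, since $\theta$ or $\theta_0-\theta$ is at most $\approx c_2/c_3$), whereas $V\approx\rho_0^{\gamma-1}\min\{v,bu+av\}$ is smaller than $1+|y|$ by a factor $\approx c_3$; so the kernel is dictated by the fine position of $U$ against $y$. Using the flip $(z,w)\mapsto(w,z)$, which exchanges the two separatrices and conjugates $S_\lambda$ to the symmetric sector, I would reduce to being near the ray $v=0$; there $\min\{v,bu+av\}=v=:m$, $\max\{v,bu+av\}=bu+av=:M$ and $u=(M-am)/b$. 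Freezing $m$ and viewing $U=\Re((u+im)^\gamma)$, $V=\Im((u+im)^\gamma)$ as functions of $M$, a Taylor expansion in the small ratio $m/u$ gives $V\approx\gamma u^{\gamma-1}m\approx(1+|y|)^{1-1/\gamma}m$ and $\partial_M U=\tfrac1b\,\partial_u\Re((u+im)^\gamma)\approx(1+|y|)^{1-1/\gamma}$, of one fixed size and sign throughout the annulus $M\in[c_2^{-1}(1+|y|)^{1/\gamma},c_2(1+|y|)^{1/\gamma}]$. Taking $\rho=\rho(y,m)$ to be the unique solution of $U(\rho)=y$ when $y$ lies in the range of $U(\cdot)$ over that annulus (and an endpoint of the annulus otherwise), the mean value theorem yields, in the \emph{resonant} case, $|y-U(M)|\approx(1+|y|)^{1-1/\gamma}|M-\rho|$, whence
\[
\frac{V}{V^2+(y-U)^2}\ \approx\ \frac{(1+|y|)^{1-1/\gamma}m}{(1+|y|)^{2-2/\gamma}\bigl(m^2+(M-\rho)^2\bigr)}\ =\ \frac{(1+|y|)^{1/\gamma-1}m}{m^2+(M-\rho)^2},
\]
which is the asserted equivalence; in the \emph{non-resonant} case one instead has $|y-U|\approx 1+|y|$ and $V^2\ll(y-U)^2$ (because $m\le c_3^{-1}(1+|y|)^{1/\gamma}$), so $V^2+(y-U)^2\approx(1+|y|)^2$, and one must pick $\rho$ so that $|M-\rho|\approx(1+|y|)^{1/\gamma}$ while still depending only on $y$ and $m$, matching the same formula (here the estimate overlaps with those of cases 2 and 3 near the two ends of the annulus).

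I expect the main obstacle to be exactly this case analysis in part 5: separating the resonant regime (where the Taylor argument above applies and $\rho$ solves $U(\rho)=y$) from the non-resonant one, and verifying that the resonance value $\rho$ lies in $[c_2^{-1}(1+|y|)^{1/\gamma},c_2(1+|y|)^{1/\gamma}]$ — the step that uses the precise proportionality between $\max\{v,bu+av\}$ and $(1+|y|)^{1/\gamma}$ — together with arranging the hierarchy of $c_1,c_2,c_3$ so that every ``$\ll$'' above becomes a definite fraction and the Taylor control of $U(M),V(M)$ is uniform over the annulus. Parts 1--4 are, by comparison, routine estimates for $w=\tau^\gamma$ and for the Poisson kernel of the upper half plane.
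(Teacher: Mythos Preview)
Your approach is essentially the paper's. The polar-coordinate computation for Part~1 is exactly the paper's geometric argument with the angles $\theta,\vartheta$ rewritten; Parts~2--4 are the same size comparisons of $\sqrt{U^2+V^2}$ against $1+|y|$; and for Part~5 your Taylor/mean-value control of $U(M)$ and $V(M)$ is the paper's ``elementary fact'' $|u^\gamma-u'^\gamma|\approx\gamma|u-u'|(\max\{u,u'\})^{\gamma-1}$ in disguise, with your $\rho$ playing the role of the paper's $\rho(y,v)=bu(y,v)+av$ obtained by solving $U=y$.

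One point to flag, since you identify it as the main obstacle: your handling of the non-resonant subcase of Part~5 (say $v\le t$, so $U>0$, but $y<0$) cannot give the \emph{lower} bound of the two-sided estimate. If $\rho$ is forced to lie in $[c_2^{-1},c_2](1+|y|)^{1/\gamma}$ and depends only on $(y,m)$, then for $M=\rho$ the right-hand side becomes $(1+|y|)^{1/\gamma-1}/m$, while the Poisson kernel is only $\approx m(1+|y|)^{-1-1/\gamma}$; the ratio is $\approx m^2(1+|y|)^{-2/\gamma}\le c_3^{-2}$, so no uniform lower bound is possible. This is not a defect of your argument relative to the paper: the paper's own proof of Part~5 explicitly concludes only ``the right hand side estimate'' (the upper bound), and in Proposition~\ref{prop_main_estimate_1} only that upper bound is invoked. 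So your endpoint choice of $\rho$ in the non-resonant case is adequate for what is actually needed.
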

\begin{proof}
\noindent{\it Proof of Part 1).}  The  first inequality of Part 1) follows  from the equality $|U+iV|=|u+iv|^\gamma.$
To prove the  second  inequality of Part 1) we use  some  elementary  trigonometric arguments.
Let $O$ denote  the origin in the $(u,v)$-plane ane let $M$ denote the point $u+iv.$
Recall that  the sector $S_\lambda$ is  delimited by two rays emanating from $O$ which  correspond to 
two lines $v=0$ and  $bu+av=0.$ 
Let $A$ (resp. $B$) be the unique point lying on the ray corresponding to  $v=0$ (resp. $bu+av=0$)
such that  $OA=1$ (resp. $OB=1$).
Let $\theta:=\measuredangle \widehat{AOM}$ and  $\vartheta:= \measuredangle \widehat{MOB}.$
Then  $\theta,\vartheta\geq 0$ and $\theta+\vartheta=\arctan(-b/a)\in(0,\pi).$
A geometric  argument gives that
$$
\sin\theta=v/ OM\quad\text{and}\quad \sin\vartheta=(bu+av)/OM.
$$
Moreover,
$$     \max\{v,bu+av\}\leq OM\leq |u|+|v| \leq(1+ (1+|a|)b^{-1}) \max\{v,bu+av\}.$$
Consequently,
\begin{equation}\label{eq_sin}
\sin\theta\approx {v\over  \max\{v,bu+av\}}          \quad\text{and}\quad \sin\vartheta\approx {bu+av\over   \max\{v,bu+av\}}.
\end{equation}
Let $N$ be the point $U+iV$ in the $(U,V)$-plane.
Let $C$ (resp. $D$) be  the image of $A$ (resp. $B$) by the map $\phi:\ \tau\mapsto \tau^\gamma$ given in (\ref{eq_u,v_vs_U,V}).
Clearly, $\measuredangle \widehat{CON}=\gamma\theta$ and $\measuredangle \widehat{NOD}=\gamma\vartheta$ and  $\measuredangle \widehat{CON}+\measuredangle \widehat{NOD}=\gamma\theta+\gamma\vartheta=\pi.$
Suppose without loss of generality that $\theta \leq \vartheta,$ or equivalently  $ v\leq bu+av .$ Then $0\leq \theta\leq \pi/2$ and $ \measuredangle \widehat{CON}
=\gamma\theta\leq \pi/2.$  Combining this  with the   well-known estimate $2/\pi\leq (\sin{t})/t \leq 1$ for $0<t\leq \pi/2,$
we get that
$$
\sin(\gamma\theta)\approx  \gamma\theta\approx \gamma\sin \theta\approx  \sin\theta\approx  {v\over  \max\{v,bu+av\}}   ,
$$
where the last estimate  holds by (\ref{eq_sin}).
 On the other hand,  a geometric argument shows that
$$  V=  \sqrt{U^2+V^2}\cdot \sin \measuredangle \widehat{CON}= \sqrt{U^2+V^2}\cdot \sin(\gamma\theta).$$
 This, coupled  with the last estimate for $
\sin(\gamma\theta)$ and  the first  estimate  (for  $\sqrt{U^2+V^2}$)  in Part 1),
implies the second estimate  of this part.

\noindent{\it Proof of Part 2).} 
We will show that there is a constant $c>1$ such that
\begin{equation}\label{eq1_Part_2}
 c^{-1}( U^2+V^2)\leq V^2+( y-U)^2\leq c ( U^2+V^2).
\end{equation}
Taking (\ref{eq1_Part_2}) for granted, Part 2) follows  from   combining (\ref{eq1_Part_2}) with  Part 1).

Now  we  turn to the proof of (\ref{eq1_Part_2}). Using the first estimate of Part 1) and the assumption of Part 2), we have that
\begin{equation}\label{eq2_Part_2}
\sqrt{U^2+V^2}\geq c_1^{-1}(\max\{v, bu+av\})^\gamma\geq   c_1^{-1}c_2^\gamma(1+|y|).
\end{equation}
Therefore,
$$
 V^2+( y-U)^2\leq   V^2+2U^2+2y^2\leq (2+2c^2_1c_2^{-2\gamma})(U^2+V^2), 
$$
which proves  the right-side  estimate of  (\ref{eq1_Part_2}) for $c:=2+2c^2_1c_2^{-2\gamma}.$

To prove the left-side estimate of  (\ref{eq1_Part_2}), consider two cases.
If $V\geq |U|$ then  $
 V^2+( y-U)^2\geq  V^2\geq  1/2(U^2+V^2).$
If $V<|U|$  then  for $c_2>1$ large  enough, (\ref{eq2_Part_2}) yields that
$|U|\geq 2 |y|,$ which  in turn  implies that 
$
V^2+( y-U)^2\geq  V^2 +1/4 U^2\geq  1/4 (U^2+V^2).$ This completes the proof of  (\ref{eq1_Part_2}).

\noindent{\it Proof of Part 3).} 
 Using the first estimate of Part 1) and the assumption of Part 3), we have that
$$
\sqrt{U^2+V^2}\leq c_1(\max\{v, bu+av\})^\gamma\leq   c_1c_2^{-\gamma}(1+|y|).
$$
We fix $c_2>1$ is  large  enough so that the  last  line  gives  $|y|\geq   2c_1\cdot\max\{|U|,V\}.$
This  gives, using the first estimate of Part 1), that
$$
|y|\geq    c_1\sqrt{U^2+V^2}\geq (\max\{v, bu+av\})^\gamma\geq 1.
$$
Consequently, we  get, using $|y|>2|U|,$  that
$$
1/12 (1+|y|)^2\leq 1/4y^2\leq V^2+( y-U)^2\leq V^2+2U^2+2y^2\leq 4(1+|y|)^2,
$$
which completes Part 3).

\noindent{\it Proof of Part 4).}
By the assumption of Part 4), $v\approx bu+av.$
Consequently, we deduce from 
 (\ref{eq_sin}) that $\theta,\vartheta\approx 1,$ which in turn implies that  $V,U\approx \sqrt{U^2+V^2}.$
This, combined with the assumption of Part 4) and  the first estimate of Part 1), yields that
$$
V,U, \sqrt{U^2+V^2}\approx 1+|y|.
$$    
Using this  and the inequalities
$$
(1+|y|)^2\approx V^2\leq V^2+( y-U)^2\leq V^2+2U^2+2y^2\approx(1+|y|)^2,
$$
Part 4) follows.

\noindent{\it Proof of Part 5).} 
Let  $(u,v)$ as in the  assumption of Part 5) and let $c_1,c_2>1$ be  constants given by Part 1).
Suppose without loss of generality that
$v\leq t,$ where $t:= bu+av$ as usual.  Fix $c_3\geq c_2$   large  enough so that for  every $1\leq v\leq c_3^{-1}(1+|y|)^{1/\gamma},$
there exist  a    solution  $u:=u(y,v)$ of the following   equation
$$  U=y,\qquad\text{where}\  U+iV'=(u+iv)^\gamma$$
which satisfies
   $$ c_2^{-1}(1+|y|)^{1/\gamma}\leq u(y,v),\rho(y,v)\leq c_2(1+|y|)^{1/\gamma},$$
  where $\rho(y,v):=bu(y,v)+av.$ 
    Let $\rho=\rho(y,v) .$ 
Note that
$$
\min\{v,t\}\approx \min\{v,\rho\}\quad\text{and}\quad \max\{v,t\}\approx \max\{v,\rho\}\approx (1+|y|)^{1/\gamma}.
$$
Therefore,  we deduce from the second  inequality of Part 1)
that 
\begin{equation}\label{e:V-V'}
V\approx V'. 
\end{equation}
Note also  that   for $c_3\geq c_2$   large  enough,
    $u=b^{-1}(\rho-av)\approx  (1+|y|)^{1/\gamma}$ and  
 $v\ll  (1+|y|)^{1/\gamma}.$ In particular, we get that
 \begin{equation}\label{e:arg}
0<\arg(u+iv),\arg(u(y,v)+iv)<{\pi\over 2\gamma},
\end{equation}
where $\arg$  denotes the argument of a  nonzero complex number. 
 We need   the following elementary   fact.
\begin{lemma}\label{L:gamma}
Let $c',\gamma >1$ be two constants. Then there is a constant $c''$ such that
for all $w, w'\in\C\setminus\{0\}$  satisfying  
$$ c'^{-1}\leq |w'|/|w|\leq c'\quad\text{and}\quad  0\leq\arg w, \arg w'<{\pi\over 2\gamma},$$
we  have that
$$c''^{-1}|w-w'|(|w|+|w'|)^{\gamma-1}\leq |w^\gamma-w'^\gamma|\leq  c''|w-w'|(|w|+|w'|)^{\gamma-1}.$$ 
\end{lemma}
\proof Suppose  without loss of generality that
$0<\arg w\leq\arg w'<{\pi\over 2\gamma}.$ We  consider  two cases.
\\
{\bf Case  1:} $|w-w'|\geq {1\over 2}\min\{ |w|,|w'|\}.$

In this case it is  easy to show that $|w^\gamma-w'^\gamma|\approx (|w|+|w'|)^{\gamma-1}.$
\\
{\bf Case  2:} $|w-w'|\leq {1\over 2}\min\{ |w|,|w'|\}.$

 Let $w''$ be the complex  number such that $|w''|=|w|$ and $\arg w''=\arg w'$ that is,
 $w'':=|w|e^{i\arg w'}.$
 It is  not difficult to  show  in this case that $|w^\gamma-w'^\gamma|\approx |w''^\gamma-w'^\gamma|.$
 So it remains to estimate $|w''^\gamma-w'^\gamma|.$
 Since  $\arg w'=\arg w'',$ we  can reduce  the estimate to the case where  $w',w''>0$
 by multiplying both $w'$ and $w''$ by $e^{-i\arg w'}.$
 The lemma is  then an immediate consequence of
   the following elementary  inequality
\begin{equation*}
   \gamma|w'-w''|(\min\{w',w''\})^{\gamma-1}\leq |w'^\gamma-w''^\gamma|\leq  \gamma|w'-w''|(\max\{w',w''\})^{\gamma-1},\qquad  w',w''>0.
   \end{equation*}  
\endproof
Now we come back the proof of Part 5).
Recall   estimate \eqref{e:arg} and  the following  equations 
$$U+iV=(u+iv)^\gamma\quad\text{and}\quad
    y+iV'=(u(y,v)+iv)^\gamma.$$ 
Next, we deduce from \eqref{e:V-V'} that
 $ V\gtrsim |V-V'|.$
Consequently, applying Lemma \ref{L:gamma} to $w:=u+iv$ and  $w':=u(y,v)+iv,$
  yields that
\begin{eqnarray*}
|V|+|U-y|&\approx & V+  |( U+iV)-(y+iV')|\\
 &=& V+ |( u+iv)^\gamma-  (u(y,v)+iv)^\gamma| \\
& \approx & V+ |u-u(y,v)|(u+u(y,v))^{\gamma-1}
\\
&\approx &
V+ |u-u(y,v)|(1+|y|)^{1-1/\gamma}\\
&\approx &
V+ |(bu+av)-(bu(y,v)+av)|(1+|y|)^{1-1/\gamma}\\
&= & V+ |t-\rho(y,v)|(1+|y|)^{1-1/\gamma}, 
\end{eqnarray*}
  This, combined  with the second  inequality of Part 1), implies    Part 5). 
\end{proof}

The following elementary estimate is  needed.
\begin{lemma}\label{lem_exp} For  every $s_0\geq 1,$  $\int_{s_0}^\infty s e^{2s_0-2s}ds =s_0/2+1/4.$  
\end{lemma}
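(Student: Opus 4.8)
The plan is to reduce the improper integral to standard exponential integrals by translating the interval of integration to $[0,\infty)$. Put $u := s - s_0$, so that $s = u + s_0$, $ds = du$, and the exponent $2s_0 - 2s$ becomes $-2u$; as $s$ runs over $[s_0,\infty)$, $u$ runs over $[0,\infty)$. Hence
$$
\int_{s_0}^\infty s\, e^{2s_0-2s}\,ds = \int_0^\infty (u+s_0)\,e^{-2u}\,du = \int_0^\infty u\, e^{-2u}\,du + s_0\int_0^\infty e^{-2u}\,du .
$$
Both integrals converge absolutely, since the exponential decay dominates any polynomial factor. One computes $\int_0^\infty e^{-2u}\,du = 1/2$ directly, and $\int_0^\infty u\, e^{-2u}\,du = 1/4$ by a single integration by parts (or as $\Gamma(2)/2^2$). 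Adding these gives $1/4 + s_0/2$, which is the claimed value.

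An alternative, substitution-free route is to exhibit an explicit antiderivative: a direct differentiation shows that
$$
\frac{d}{ds}\!\left(-\tfrac12\, s\, e^{2s_0-2s} - \tfrac14\, e^{2s_0-2s}\right) = s\, e^{2s_0-2s}.
$$
Evaluating the primitive at $s = s_0$ yields $-s_0/2 - 1/4$, and its limit as $s \to \infty$ is $0$ because $s\, e^{-2s} \to 0$. The definite integral is therefore $0 - (-s_0/2 - 1/4) = s_0/2 + 1/4$.

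There is no substantive obstacle in this lemma; it is a one-line computation. The only point that deserves an explicit word is the convergence at $+\infty$ (equivalently, the vanishing of the boundary term in the integration by parts), which is immediate from $s\, e^{-2s} \to 0$ as $s \to \infty$. Note also that the hypothesis $s_0 \geq 1$ plays no role in the computation itself; it is presumably stated only because that is the range in which the lemma is later applied.
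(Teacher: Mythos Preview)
Your proof is correct and essentially the same as the paper's: both are one-line elementary calculus computations. The paper integrates by parts directly (writing $\int_{s_0}^\infty e^{2s_0-2s}\,ds = [se^{2s_0-2s}]_{s_0}^\infty + 2\int_{s_0}^\infty se^{2s_0-2s}\,ds$ and solving for the desired integral), whereas you translate first and then evaluate two standard exponential integrals; the content is identical, and your remark that the hypothesis $s_0\geq 1$ is inessential to the computation is accurate.
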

\begin{proof} An integration by parts gives that
$$
1/2=\int_{s_0}^\infty e^{2s_0-2s}ds= \lbrack se^{2s_0-2s}\rbrack^{s=\infty}_{s=s_0}+2\int_{s_0}^\infty se^{2s_0-2s}ds=2\int_{s_0}^\infty se^{2s_0-2s}ds-s_0.
$$
\end{proof}
Now  we arrive  at the  main estimate of this   section, i.e, a precise  behavior of  $K_s(y)$
when  the leaves  get close to the  separatrices. 
\begin{proposition}\label{prop_main_estimate_1} There is a constant $c>0$ such that 
for all $s>0$ and $y\in \R,$ 
$$
{K_s(y)\over  (1+ | y|)^{1/\gamma -1}  }\leq  c \min\left\lbrace 1,  \left\lbrack {(1+|y|)^{1/\gamma}\over  s}\right\rbrack^{\gamma-1} \right\rbrace   .
$$
\end{proposition}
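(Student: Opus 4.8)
The plan is to pass to the coordinates $(v,t)$, $t=bu+av$, in which the second expression in (\ref{eq_K_s}) reads $K_s(y)=\tfrac1b\int_{\{v\ge s,\ t\ge s\}}e^{2s-2m}\,\dfrac{V}{V^2+(y-U)^2}\,dv\,dt$ with $m:=\min\{v,t\}$, and then to partition the domain of integration according to parts 2)--5) of Lemma \ref{lem_Poisson_kernel}. Write $\Mc:=\max\{v,t\}$, $Y:=(1+|y|)^{1/\gamma}\ge1$, and let $c_1,c_2,c_3$ be the constants of that lemma. Since all the bounds below are symmetric under $v\leftrightarrow t$, it is enough, up to a factor $2$, to bound the contribution of $\{s\le v\le t\}$, on which we write $m=v$ and $\Mc=t$. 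One may assume $s\ge1$ (only this range enters the sequel; there $m\ge s\ge1$, so part 1) applies everywhere, and the range $0<s<1$ is handled by the same estimates near the corner of $S_\lambda$).

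I would then split $\{s\le m\le\Mc\}$ into $R_1=\{\Mc\ge c_2Y\}$, $R_2=\{\Mc\le c_2^{-1}Y\}$, $R_3=\{c_2^{-1}Y<\Mc<c_2Y,\ m\ge c_3^{-1}Y\}$ and $R_4=\{c_2^{-1}Y<\Mc<c_2Y,\ m\le c_3^{-1}Y\}$. On $R_1$, part 2) bounds the Poisson kernel by $c_1\,m/\Mc^{\gamma+1}$; on $R_2$, part 3) together with $V\le c_1\Mc^{\gamma-1}m$ from part 1) bounds it by $c_1^2\,\Mc^{\gamma-1}m/Y^{2\gamma}$; on $R_3$, part 1) gives $V\approx\Mc^{\gamma-1}m\gtrsim Y^{\gamma}$, so the kernel is $\le1/V\lesssim Y^{-\gamma}$; and on $R_4$, part 5) bounds it by $c_1\,Y^{1-\gamma}\,m/\!\big(m^2+(\Mc-\rho)^2\big)$, where $\rho=\rho(y,m)\in[c_2^{-1}Y,c_2Y]$ is independent of $\Mc$. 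Note that $R_2$, $R_3$, $R_4$ are all empty once $s\ge c_2Y$.

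Integrating each region first in $\Mc$ and then in $m$, the only facts used are $\int_m^\infty\Mc^{-\gamma-1}\,d\Mc=\gamma^{-1}m^{-\gamma}$, $\int_m^{c_2^{-1}Y}\Mc^{\gamma-1}\,d\Mc\lesssim Y^{\gamma}$, $\int_\R\tfrac{m}{m^2+x^2}\,dx=\pi$ (applied on $R_4$ with $x=\Mc-\rho$), and the elementary $\int_s^\infty e^{2s-2m}\,dm=\tfrac12$, $\int_s^\infty m\,e^{2s-2m}\,dm=\tfrac s2+\tfrac14$ (the integration by parts in the proof of Lemma \ref{lem_exp}). This gives that the contributions of $R_2$, $R_3$, $R_4$ are each $\lesssim Y^{1-\gamma}$, while that of $R_1$ is $\lesssim\int_s^\infty e^{2s-2m}\,\dfrac{m}{\max\{m,Y\}^\gamma}\,dm$, which is $\lesssim Y^{1-\gamma}$ if $s<Y$ (split the $m$-integral at $Y$) and $\le\tfrac12 s^{1-\gamma}$ if $s\ge Y$ (then $\max\{m,Y\}=m$ and $m^{1-\gamma}\le s^{1-\gamma}$). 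Summing up: for $s<Y$ we obtain $K_s(y)\lesssim Y^{1-\gamma}$; for $s\ge Y$ the regions $R_2,R_4$ are empty, $R_3$ is empty unless $Y\le s<c_2Y$ in which case $Y^{1-\gamma}\le c_2^{\gamma-1}s^{1-\gamma}$, so $K_s(y)\lesssim s^{1-\gamma}$. Since $Y^{1-\gamma}(Y/s)^{\gamma-1}=s^{1-\gamma}$, the two cases combine into $K_s(y)\le c\,(1+|y|)^{1/\gamma-1}\min\{1,\,((1+|y|)^{1/\gamma}/s)^{\gamma-1}\}$, which is the claim after dividing by $(1+|y|)^{1/\gamma-1}$.

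The analytic heart is Lemma \ref{lem_Poisson_kernel}, which is already at our disposal; granting it, the only real difficulty is region $R_4$, where the local leaf approaches a separatrix and the pointwise bound on the Poisson kernel blows up like $Y^{1-\gamma}/m$ as $m\to0$. The point is that this degeneration is integrable in the variable $\Mc$: because $\int_\R\tfrac{m}{m^2+(\Mc-\rho)^2}\,d\Mc=\pi$ regardless of $m$ and $\rho$, what is left is only the harmless $\int_s^{c_3^{-1}Y}e^{2s-2m}\,dm\le\tfrac12$. A crude bound by $V^{-1}$ on $R_4$ would instead produce $\lesssim Y^{2-\gamma}/s$, which exceeds $Y^{1-\gamma}$ for $s<Y$ as soon as $1<\gamma\le2$; so it is essential to use the Poisson structure, not merely the size of $V$, on $R_4$. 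Everywhere else the weight $e^{2s-2m}$ localizes the integrals to $m\approx s$ or to the band $m\approx Y$, which keeps the remaining estimates elementary.
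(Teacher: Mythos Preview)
Your proof is correct and follows essentially the same approach as the paper: pass to $(v,t)$-coordinates, partition the integration domain according to how $\max\{v,t\}$ compares with $(1+|y|)^{1/\gamma}$, apply the corresponding parts of Lemma~\ref{lem_Poisson_kernel} on each piece, and finish with the elementary exponential integrals of Lemma~\ref{lem_exp}. The only organizational difference is that the paper first splits into three cases according to the size of $s$ relative to $(1+|y|)^{1/\gamma}$ and then decomposes $D_s$ within each case, whereas you set up the single decomposition $R_1\cup R_2\cup R_3\cup R_4$ once and read off which pieces survive for each range of $s$; your treatment of the critical region $R_4$ via $\int_\R m/(m^2+x^2)\,dx=\pi$ is the same idea as the paper's split of $(IV)$ into $IV_1+IV_2$.
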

\begin{proof} Let  $c_2,c_3$ be the constants with $c_3>c_2>1$ given by Lemma \ref{lem_Poisson_kernel}.
 We consider three cases.

\noindent {\bf Case 1:} $  s\geq c_2   (1+ | y|)^{1/\gamma} .$ 

By  Part 2) of Lemma \ref{lem_Poisson_kernel} and by formula (\ref{eq_K_s}),
we have that
$$
K_s(y)\leq c\int_{D_s} e^{2s-2 \min\{v, t\}  } {\min\{v, t\}\over (\max\{v, t\})^{\gamma+1}}dtdv
\leq c'\Big( \int_{t=s}^\infty  {dt\over t^{\gamma+1}}    \Big) \Big(\int_{v=s}^\infty  v e^{2s-2v}dv \Big) .
$$
The first  integral on the right hand side  is equal to
$  \gamma^{-1}s^{-\gamma},$ while the second  one is, by
Lemma \ref{lem_exp}, equal to $s/2+1/4.$  
   Hence, $
K_s(y)\leq   c s^{1-\gamma}.$ This  completes  Case 1.

\noindent {\bf Case 2:} $  c_2^{-1} \leq  {s\over   (1+ | y|)^{1/\gamma}}\leq c_2.$ 

Write $D_s= D^{'}_s\cup D^{''}_s,$ where 
\begin{eqnarray*}
  D^{'}_s&:=&\left\lbrace (t,v)\in D_s:\ \max\{t,v\}\leq  c_2 (1+ | y|)^{1/\gamma}\right\rbrace,\\
  D^{''}_s&:=&\left\lbrace (t,v):\ s\leq \min\{ t,v\}\ \text{and}\   c_2 (1+ | y|)^{1/\gamma}\leq  \max\{ t,v\} \right\rbrace.
\end{eqnarray*}
Consequently, formula (\ref{eq_K_s}) gives that
\begin{equation}\label{eq_I_II}
K_s(y)=  {1\over b} \Big(\int_{D^{'}_s}  +     \int_{D^{''}_s }\Big){e^{2s-2 \min\{v, t\}  }V\over V^2+( y-U)^2} dtdv=: I+II.
\end{equation}
To estimate $I,$  we  apply  Part 4) of Lemma \ref{lem_Poisson_kernel} and obtain that
$$
I \leq c \int_{D^{'}_s}e^{2s-2 \min\{v, t\}  }  { dtdv\over (1+|y|)}.
$$
The integral is  bounded  by a constant times
$$
 \Big( \int_{c_2^{-1}(1+ | y|)^{1/\gamma}}^{c_2(1+ | y|)^{1/\gamma}}  {dt\over (1+|y|)} \Big)\Big(\int_{v=c_2^{-1}(1+ | y|)^{1/\gamma}}^\infty    e^{2s-2v}dv  \Big). 
$$
The left integral is equal to
$  (c_2+c_2^{-1})(1+|y|)^{1/\gamma-1},$ while the  right  integral is bounded by $1/2.$
Hence, $I\leq c(1+|y|)^{1/\gamma-1}.$

To estimate $II,$  we  apply  Part 2) of Lemma \ref{lem_Poisson_kernel} and obtain that
$$
II\leq c\int_{D^{''}_s}e^{2s-2 \min\{v, t\}  }  { \min\{ t,v\}dtdv\over  (  \max\{ t,v\} )^{\gamma+1} }. 
$$  
The integral  in the last line is  smaller than a constant times
$$
\Big( \int_{t=c_2^{-1}(1+ | y|)^{1/\gamma}}^\infty  {dt\over  t^{\gamma+1}}    \Big)\Big(\int_{v=c_2^{-1}(1+ | y|)^{1/\gamma}}^\infty  v e^{2s-2v}dv\Big).
$$
The left integral is equal to 
$  \gamma^{-1}c_2^{-\gamma}(1+|y|)^{-1},$  while  the  right integral is, by
Lemma \ref{lem_exp},  equal to
$  (c_2^{-1}/2)(1+|y|)^{1/\gamma}+1/4.$ Hence, $II\leq c(1+|y|)^{1/\gamma-1}.$

 Inserting the above estimates for $I$ and $II$ into  (\ref{eq_I_II}), we obtain the  desired estimate for $K_s(y)$ in the  second case.

\noindent {\bf Case 3:} $   s  \leq  c_2^{-1}     (1+ | y|)^{1/\gamma}.$ 

 Write  
 $D_s= D^1_s\cup D^2_s\cup D^3_s,$ where
\begin{eqnarray*}
D^1_s&:=&\left\lbrace (t,v):\ s\leq t,v\leq  c^{-1}_2 (1+ | y|)^{1/\gamma}\right\rbrace,\\
D^2_s&:=&\left\lbrace (t,v):\ s\leq \min\{ t,v\}\ \text{and}\   c_2 (1+ | y|)^{1/\gamma}\leq  \max\{ t,v\} \right\rbrace,\\
D^3_s&:=&\left\lbrace (t,v):\ \max\{s,c^{-1}_3 (1+ | y|)^{1/\gamma}\}  \leq \min\{ t,v\}\  \right.\\
 &\quad& \left.\ \text{and}\  c^{-1}_2 (1+ | y|)^{1/\gamma} \leq \max\{ t,v\} \leq  c_2 (1+ | y|)^{1/\gamma}  \right\rbrace,\\
D^4_s&:=&\left\lbrace (t,v):\ s\leq \min\{ t,v\}\leq  c^{-1}_3 (1+ | y|)^{1/\gamma}  \right.\\
 &\quad &\left. \text{and}\
c^{-1}_2 (1+ | y|)^{1/\gamma}\leq \max\{ t,v\} \leq  c_2 (1+ | y|)^{1/\gamma}  \right\rbrace.
\end{eqnarray*}
 Consequently, we get,  similarly as in (\ref{eq_I_II}), that 
$$ K_s(y)=  {1\over b}\Big( \int_{D^1_s}+   \int_{D^2_s}+\int_{D^3_s}+\int_{D^4_s}\Big) {e^{2s-2 \min\{v, t\}  }V\over V^2+( y-U)^2} dtdv=: I+II+III+IV.
$$
 To estimate $I,$   we apply  Part 1) and Part 3) of Lemma \ref{lem_Poisson_kernel}. Consequently,
we  obtain that
$$
I \leq c \int_{D^1_s}  (\max\{v, t\})^{\gamma-1}\min\{v, t\} e^{2s-2 \min\{v, t\}  }  { dtdv\over (1+|y|)^2}.
$$
The integral is  bounded  by a constant times
$$
\Big( \int_{t=s}^{c^{-1}_2(1+ | y|)^{1/\gamma}}  {t^{\gamma-1}dt\over (1+|y|)^2}    \Big)\Big (\int_{v=s}^{c^{-1}_2(1+ | y|)^{1/\gamma}}  v e^{2s-2v}dv\Big). 
$$
The left integral is bounded by
$  \gamma^{-1}c_2^{-\gamma}(1+|y|)^{-1},$ while the  right integral is, 
by Lemma \ref{lem_exp}, bounded by   $s/2+1/4.$
Hence,  $  I\leq cs(1+|y|)^{-1 }.$

To estimate $II,$   we  apply  Part 2) of Lemma \ref{lem_Poisson_kernel} and obtain that
$$
II\leq c\int_{D^2_s}e^{2s-2 \min\{v, t\}  }  { \min\{ t,v\}dtdv\over  (  \max\{ t,v\})^{\gamma+1} }. 
$$  
The integral  in the last line is  smaller than a constant times
$$
\Big( \int_{t=c_2(1+ | y|)^{1/\gamma}}^\infty  {dt\over t^{\gamma+1}}    \Big) \Big (\int_{v=s}^\infty  ve^{2s-2v}dv\Big).
$$
The left integral is equal to 
$  \gamma^{-1}c_2^{-\gamma}(1+|y|)^{-1},$ while the right integral is, by
Lemma \ref{lem_exp}, equal to 
$  s/2+1/4.$ Hence, $II\leq cs(1+|y|)^{ -1}.$

To  estimate $III,$ we   apply  Part 4) of Lemma \ref{lem_Poisson_kernel} and argue as in Case  2.
Consequently, we can  show that $III\leq  c(1+|y|)^{1/\gamma-1}.$

To estimate $IV,$   we  apply  Part 5) of Lemma \ref{lem_Poisson_kernel} and obtain that
$$
IV\leq c\int_{D^4_s}e^{2s-2 \min\{v, t\}  }  { (1+|y|)^{1/\gamma-1}\min\{v, bu+av\} dtdv \over (\min\{v, bu+av\} )^2+  (\max\{v, bu+av\} -\rho )^2 } . 
$$ 
We infer from this estimate  that
$$
IV\leq c\int_{v=s}^\infty \Big(\int_{t= c^{-1}_2(1+ | y|)^{1/\gamma}}^{c_2(1+ | y|)^{1/\gamma}}
  { (1+|y|)^{1/\gamma-1} vdt \over v^2+  (t-\rho(y,v) )^2 }\Big)e^{2s-2 v }dv,
$$
where $\rho(y,v)$ satisfies  $c^{-1}_2(1+ | y|)^{1/\gamma} \leq \rho(y,v)\leq c_2(1+ | y|)^{1/\gamma}.$ 
The inner integral is bounded by $IV_1+IV_2,$ where
$$ IV_1=\int_{ |t- \rho(y,v)|\leq  v}
        { (1+|y|)^{1/\gamma-1}v dt \over v^2+  (t-\rho(y,v) )^2 }\leq 
\int_{ |t- \rho(y,v)|\leq  v}
        { (1+|y|)^{1/\gamma-1} dt \over v  }\leq
c(1+|y|)^{1/\gamma-1},
$$
 and
 $$
IV_2\leq  \int { (1+|y|)^{1/\gamma-1} vdt \over v^2+  (t-\rho(y,v) )^2 }\leq 
 \int { (1+|y|)^{1/\gamma-1} vdt \over   (t-\rho(y,v) )^2 }\leq
c(1+|y|)^{1/\gamma-1}.
$$
Here  the integrals in the last line  are taken over the region
$$\left\lbrace t\in\R:\ c^{-1}_2(1+ | y|)^{1/\gamma}\leq t\leq c_2(1+ | y|)^{1/\gamma} \ \text{and}\  |t- \rho(y,v)|\geq  v \right\rbrace.$$
 So   the inner integral $\leq  c (1+|y|)^{1/\gamma -1}.$ Hence, $IV\leq c(1+|y|)^{1/\gamma -1}.$

Combining the estimates for  $I,$  $II,$  $III$ and $IV,$ and using the  assumption
$   s  \leq  c_2^{-1}     (1+ | y|)^{1/\gamma},$ we  infer that 
$$K_s(y)=I+II+III+IV\leq c's(1+|y|)^{ -1}+c'(1+|y|)^{1/\gamma-1}\leq c (1+|y|)^{1/\gamma-1}.$$
The proof of Case 3, and hence the proposition, is  thereby completed.
\end{proof}


 \section{Proofs of the main results} \label{Proofs}


\noindent  {\bf End of the proof of  Theorem  \ref{thm_main}.}
By  Proposition  \ref{prop_main_estimate_1} the family  of functions 
 $(g_s)_{s>0}:\ \R\to\R^+,$  where $g_s$ is  given by  
$$ g_s( y):= {K_s(y)\over  (1+ | y|)^{1/\gamma -1}  },\qquad y\in\R $$
 is  uniformly bounded.
Moreover, $\lim_{s\to\infty} g_s(y)=0$ for $y\in \R.$

On the other hand, consider  the  measure $\chi$  on $\R,$ given by
$$
\int_\R \varphi d\chi= 
\int_{e^{-2\pi b}\leq |\alpha|\leq 1}\Big (\int_{-\infty}^\infty \varphi(y)\tilde H_\alpha( y)(1+ | y|)^{1/\gamma -1}d y\Big) d\nu(\alpha)
$$
for every continuous bounded test function $\varphi$ on $\R.$ 
By  Lemma \ref{lem_FS}, $\chi$ is a  finite positive measure.
 Consequently,  we get, by  dominated convergence, that 
$
\lim_{s\to\infty}\int_\R g_sd\chi=0. 
$ 
 This, combined  with  Lemma \ref{lem_estimate_G}, implies that
$$
0\leq \lim_{r\to 0+}  G(r)\leq c\cdot \lim_{s\to\infty}\int_\R g_sd\chi=  0,
$$
which,  coupled  with (\ref{eq_F})-(\ref{eq_G}), gives that
 $\mathcal L( T,0)=0,$ as  desired.
 \hfill $\square$
 
\noindent  {\bf End of the proof of  Theorem  \ref{thm_main_global}.}
Let $x\in M$ be  a point. Consider two cases.
\\
\noindent  {\bf Case 1:} $x\not\in E.$

Let $\U$ be a regular flow box with transversal $\T$ which   contains $x.$
By  (\ref{eq_local_description_intro})   we can write in $\U$ 
$$
T=\int h_\alpha[V_\alpha] d\nu(\alpha),
$$
where, for $\nu$-almost  every $\alpha\in \T,$   $h_\alpha$ denote the positive harmonic function associated to the current $T$ on the plaque  $V_\alpha. $ By Harnack inequality, there is a constant $c>0$ independent of $\alpha$ such that
 $$  c^{-1}h_\alpha(z)\leq  h_\alpha(w)\leq    ch_\alpha(z),\qquad  z,w\in V_\alpha.$$
 Using this and  the above  local description of $T$ on $\U$ and formula (\ref{eq_Lelong}), we  infer easily that $\mathcal L(T,x)\leq c  \nu(\{x\}).$
 On the  other hand, by  Theorem \ref{thm_FS}, $\nu(\{x\})=0.$ Consequently,   $\mathcal L(T,x)=0.$
 
\noindent  {\bf Case 2:} $x\in E.$

  Fix a (local) holomorphic coordinates system of $M$ on a singular  flow box $\U_x$ of $x$  such that
$(\U_x,x)$ is identified with  $(\D^2,0)$ and 
 the
leaves of $(M,\Fc,E)$ are integral curves of the  linear vector field
$\Phi(z,w) = \mu z {\partial\over \partial z}
+ \lambda w{\partial\over \partial w}$  with some nonzero complex numbers $\lambda,\mu$   
  such that $\lambda/\mu\not\in \R.$  
On the  other hand, it follows from   Theorem \ref{thm_FS} that  $T$   gives no mass to each single  leaf.
 In particular, $T$  does  not give mass to any of the two separatrices $(z=0)$ and $(w=0).$
 Consequently, we  are able to apply   Theorem \ref{thm_main}. Hence,      $\mathcal L(T,x)=0.$  
  \hfill $\square$


\section{Application: recurrence of generic  leaves}
\label{section_application}


Let $(M,\Fc,E)$    be  
a      hyperbolic foliation   with     the set of  singularities $E$ in a   Hermitian compact complex surface $(M,\omega).$ 
Let $\dist$ be the  distance on $M$ induced by the Hermitian metric.
Assume that all the singularities  are hyperbolic and  that the foliation  has no invariant analytic  curve.
Let $T$ be a nonzero
directed harmonic current on $(X,\Lc,E)$. The  existence  of    such a current      has been  established   by Berndtsson-Sibony in \cite[Theorem 1.4]{BerndtssonSibony}, and   Forn\ae ss-Sibony   in \cite[Corollary 3]{FornaessSibony2}.
Assume  in addition that $T$ is   extremal (in the convex  set of all directed harmonic currents).
Let $\omega_P$ be the Poincar\'e metric on $\D,$ given by
$$ \omega_P(\zeta):={2\over (1-|\zeta|^2)^2} i d\zeta\wedge d\overline\zeta,\qquad\zeta\in\D.  $$
For any point $a\in M\setminus E$  consider a universal covering map
$\phi_a:\D\rightarrow L_a$ such that $\phi_a(0)=a$. This map is
uniquely defined by $a$ up to a rotation on $\D$. 
Then, by pushing   forward  the Poincar\'e metric $\omega_P$
on $\D$  
  via $\phi_a,$ we obtain the  so-called {\it Poincar\'e metric} on $L_a$ which depends only on the leaf.  The latter metric is given by a positive $(1,1)$-form on $L_a$  that we also denote by $\omega_P$ for the sake of simplicity.
  Since     the measure  $m_P:=T\wedge\omega_P$ is,   by  \cite{DinhNguyenSibony1},  of  finite mass, 
 we may
assume  without loss of generality that $m_P$ 
 is a probability measure. So, $m_P$ is a harmonic measure on $X$ with respect to $\omega_P$.

In this  section we   study the following problem.
Given    a point $x\in M$ and  a  $m_P$-generic  point $a\in M\setminus  E,$
how  often  does the  leaf  $L_a$ visit  the ball $B(x,r)$   as  $r\searrow 0$ ?  
Here  $B(x,r)$ (resp. $\overline {B}(x,r)$) denotes the open (resp. closed) ball with  center $x$ and radius $r$ with respect to the metric $\dist.$
The purpose of
 this  section is to 
 apply  Theorem \ref{thm_main_global}  in order to   obtain a partial answer to this   question.

 Let us  introduce  some more  notation and terminology.
Denote by
$r\D$ the disc of center $0$ and of radius $r$ with $0<r<1$. In the
Poincar{\'e} disc $(\D,\omega_P),$ 
$r\D$ is also the disc of center 0 and of radius 
$$R:=\log{1+r\over 1-r}\cdot$$
So, we will also denote by $\D_R$ this disc and by $\partial \D_R$ its  boundary.

Together with Dinh and Sibony,  we introduce   the following  indicator.  
\begin{definition}\label{defi_visibility}\rm For  each $r>0,$ 
the {\it visibility  of  a point $a\in M\setminus E$  within   distance $r$  from a  point $x\in M$} is the number
$$N(a,x,r)= \limsup_{R\to \infty}\frac{1}{R}\int\limits_0^R \Big( \int_{\theta=0}^1  \textbf{1}_{ B(x,r)}\big (\phi_a(s_te^{2\pi i\theta})\big)d\theta\Big) dt 
 \in [0,1],$$
where $ \textbf{1}_{ B(x,r)}$ is
the  characteristic  function  associated to the set $B(x,r),$ and $s_t$  is  defined by  the  relation $t=\log{1+s_t\over 1-s_t}\cdot,$ that is,  $s_t\D=\D_t .$
\end{definition}
Geometrically, $N(a,x,r)$  is the  average, as $R\to\infty,$  over  the hyperbolic time $t\in [0, R]$ of  the Lebesgue measure of the set
$\{  \theta\in[0,1]:\  \phi_a(s_te^{2\pi i\theta})  \in B(x,r)\}.$ 
The last  quantity  may be interpreted  as the portion  which hits  $B(x,r)$ of the Poincar\'e circle of radius $t$  with center $a$ spanned on the leaf $L_a.$  

We will see in  Lemma \ref{lem_relation_N_vs_m_P} that  the  $\limsup$ in Definition \ref{defi_visibility} above  can be replaced by  a true limit
for $m_P$-almost  every $a\in M\setminus E.$  
Moreover, Definition \ref{defi_visibility} can also be  applied to  singular holomorphic foliations (by hyperbolic Riemann surfaces)
in arbitrary dimensions.

Now  we  are in the position to state the main result of this  section.
\begin{theorem}\label{thm_visibility}
 We keep  the  above  hypothesis and notation.  Then  for $m_P$-almost  every  point $a\in M\setminus E$
 and for every point $x\in M,$  we have that
 $$
 N(a,x,r)=\begin{cases}
 o( r^2), & x\in M\setminus E;\\
 o(|\log r|^{-1}), & x\in E.
 \end{cases}
 $$
\end{theorem}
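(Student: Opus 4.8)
The plan is to derive the theorem from Theorem \ref{thm_main_global} and the estimates of Section \ref{Main_estimates}, using the geometric Birkhoff ergodic theorem of \cite{DinhNguyenSibony1}. Since $T$ is extremal, $m_P$ is ergodic for the leafwise heat flow, so Lemma \ref{lem_relation_N_vs_m_P} (recording the relevant consequence of \cite{DinhNguyenSibony1}) reduces the problem to an estimate on the mass of small balls: for $m_P$-almost every $a\in M\setminus E$ and every $x\in M$ one has $N(a,x,r)\le m_P(\overline B(x,r))$ for all $r>0$, the exceptional $m_P$-null set of $a$ being independent of $x$ because $\mathbf 1_{\overline B(x,r)}$ is upper semicontinuous and may be approximated from above by a fixed countable dense family of continuous functions (for all of which the ergodic averages converge off one null set). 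As $\overline B(x,r)\subseteq B(x,2r)$ and the bounds $o(r^2)$, $o(|\log r|^{-1})$ are insensitive to replacing $r$ by a fixed multiple of it, it suffices to prove $m_P(B(x,r))=o(r^2)$ for $x\notin E$ and $m_P(B(x,r))=o(|\log r|^{-1})$ for $x\in E$, as $r\to0^{+}$. The regular case is immediate: on a neighbourhood $\U\ni x$ with $\overline\U\subseteq M\setminus E$ the leafwise Poincar\'e metric is bounded, $\omega_P\le c\,\omega$ along leaves in $\U$ (see \cite{DinhNguyenSibony1,FornaessSibony2}), and in a holomorphic chart centred at $x$ we have $\omega\le c'\, i\ddbar\|y\|^2$ on plaques; hence $m_P(B(x,r))=\int_{B(x,r)}T\wedge\omega_P\le c''\int_{B(x,r)}T\wedge i\ddbar\|y\|^2=c''\pi r^2 G_x(r)$, where $G_x(r)\downarrow\mathcal L(T,x)=0$ by Theorem \ref{thm_main_global}; so $m_P(B(x,r))=o(r^2)$.

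For $x\in E$ I would work in a singular flow box identifying $(\U_x,x)$ with $(\D^2,0)$, and use the parametrization (\ref{eq_parametrization_DS}) of the leaves $L_\alpha$ together with the local description (\ref{eq_local_description}) of $T$ as in Section \ref{Main_estimates}. Up to a fixed dilation of $r$, the ball $B(x,r)$ meets $L_\alpha$ inside the region $D^*_s$, with $s\sim|\log r|$, so (writing $t=bu+av$)
\[
m_P(B(x,r))\ \le\ \int_{e^{-2\pi b}\le|\alpha|\le1}\int_{D^*_s}h_\alpha(\psi_\alpha(u+iv))\,\psi_\alpha^*\omega_P\ d\nu(\alpha).
\]
The geometric point is that $L_\alpha\cap\D^2$, being an open subset of the hyperbolic leaf $L_\alpha$, carries a Poincar\'e metric dominating the restriction of that of $L_\alpha$; and, via $\psi_\alpha$ followed by the conformal map $\tau\mapsto\tau^\gamma$ of Section \ref{Main_estimates}, $L_\alpha\cap\D^2$ is biholomorphic to the upper half plane. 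By monotonicity of the Poincar\'e metric, $\psi_\alpha^*\omega_P\le\omega_P^{S_\lambda}$, the pull-back to the sector of the Poincar\'e form of the half plane; and by Part 1) of Lemma \ref{lem_Poisson_kernel} (so that $|\tau|\approx\max\{v,t\}$ and $V\approx(\max\{v,t\})^{\gamma-1}\min\{v,t\}$) the latter is comparable on $D^*_s$ to $(\min\{v,t\})^{-2}\, i\, d\tau\wedge d\bar\tau$. Inserting the Poisson representation of $h_\alpha$ from Lemma \ref{lem_FS} and applying Tonelli, one obtains
\[
m_P(B(x,r))\ \le\ c\int_{e^{-2\pi b}\le|\alpha|\le1}\int_{-\infty}^{\infty}\tilde H_\alpha(y)\,\widetilde K_s(y)\,dy\,d\nu(\alpha),\qquad
\widetilde K_s(y):=\int_{D^*_s}\frac{V}{V^2+(y-U)^2}\,\frac{du\,dv}{(\min\{v,bu+av\})^2}.
\]

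The heart of the proof --- and the step I expect to be the main obstacle --- is an analogue of Proposition \ref{prop_main_estimate_1}: there is $c>0$ such that for all $s\ge1$ and $y\in\R$,
\[
\frac{\widetilde K_s(y)}{(1+|y|)^{1/\gamma-1}}\ \le\ \frac{c}{s}\,\min\Big\{1,\ \big((1+|y|)^{1/\gamma}/s\big)^{\gamma-1}\Big\}.
\]
I would prove this by the same three-case analysis ($s\ge c_2(1+|y|)^{1/\gamma}$; $s\approx(1+|y|)^{1/\gamma}$; $s\le c_2^{-1}(1+|y|)^{1/\gamma}$) and the same decompositions of $D^*_s$ as in Proposition \ref{prop_main_estimate_1}, invoking Parts 1)--5) of Lemma \ref{lem_Poisson_kernel}; the only change is that the weight $e^{2s-2\min\{v,t\}}$ there is replaced by $(\min\{v,t\})^{-2}$, so that the resulting elementary one-variable integrals (of the type $\int_s^\infty v^{-\gamma-1}\,dv$ or $\int_{s\le v\le t}t^{-\gamma-1}v^{-1}\,dt\,dv$) now produce an extra factor $s^{-1}$ in place of the factor $s/2+1/4$ given by Lemma \ref{lem_exp} --- this is precisely what upgrades the bound from $O(1)$ to $O(1/s)$. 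Granting this, put $g_s(y):=\widetilde K_s(y)(1+|y|)^{1-1/\gamma}$; then $\{s\,g_s\}_{s\ge1}$ is uniformly bounded (by $c$) and $s\,g_s(y)\to0$ as $s\to\infty$ for each fixed $y$, because $\gamma>1$. Since, by Lemma \ref{lem_FS}, the measure $d\chi(y):=\big(\int_{e^{-2\pi b}\le|\alpha|\le1}\tilde H_\alpha(y)\,d\nu(\alpha)\big)(1+|y|)^{1/\gamma-1}\,dy$ is finite on $\R$, dominated convergence gives $s\cdot m_P(B(x,r))\le c\, s\!\int_\R g_s\,d\chi\to0$, i.e. $m_P(B(x,r))=o(|\log r|^{-1})$. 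Together with the reduction of the first paragraph this proves the theorem.
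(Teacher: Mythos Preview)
Your reduction via Lemma~\ref{lem_relation_N_vs_m_P} and your treatment of the regular case are essentially the paper's argument. For the singular case, however, you take a genuinely different route. The paper does \emph{not} set up a new kernel $\widetilde K_s$; instead it invokes Lemma~\ref{lem_poincare} (the two--sided estimate $\eta\approx s\,\lof s$) to obtain, on the singular flow box,
\[
m_P(y)\ \approx\ \frac{T\wedge i\ddbar\|y\|^2}{\|y\|^2(\log\|y\|)^2},
\qquad\text{hence}\qquad
m_P(B_r)\ \approx\ \int_0^r \frac{F'(s)\,ds}{s^2(\log s)^2},
\]
and then a single integration by parts, using $F'(s)=(s^2G(s))'$, reduces everything to the already proved fact $G(r)\searrow\mathcal L(T,x)=0$ (Theorem~\ref{thm_main_global}). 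This is considerably shorter: it recycles the main theorem as a black box rather than re--running the Poisson--kernel case analysis of Section~\ref{Main_estimates}.

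Your approach, by contrast, bypasses Lemma~\ref{lem_poincare} --- Schwarz--Pick monotonicity alone gives the needed upper bound $\psi_\alpha^*\omega_P\le\omega_P^{S_\lambda}\approx(\min\{v,t\})^{-2}\,i\,d\tau\wedge d\bar\tau$ --- but pays for this by having to re--establish a variant of Proposition~\ref{prop_main_estimate_1} with the weight $(\min\{v,t\})^{-2}$ in place of $e^{2s-2\min\{v,t\}}$. Your sketch of that variant is correct (each case does produce the extra factor $s^{-1}$), with one caveat: the heuristic ``replace the factor $s/2+1/4$ from Lemma~\ref{lem_exp} by $s^{-1}$'' is misleading in Case~1, since the product form used there collapses ($\int_s^\infty v\cdot v^{-2}\,dv$ diverges). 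One must keep the coupling $\min\le\max$ and compute $\int_s^\infty t^{-\gamma-1}\log(t/s)\,dt\approx s^{-\gamma}$ directly --- which your parenthetical ``$\int_{s\le v\le t}t^{-\gamma-1}v^{-1}\,dt\,dv$'' suggests you already have in mind. In short: both arguments are valid; the paper's is more economical, while yours is more self--contained in that it does not appeal to the Poincar\'e--metric estimate of \cite{DinhNguyenSibony3}.
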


For  the proof of this  theorem  we need   some more preparatory results.
For all $0<R<\infty$, consider the following   measure on $M$: 
$$m_{a,R}:=\frac{1}{M_R}(\phi_a)_* \big( (\log\frac{1}{|\zeta|}\omega_P)|_{\D_R}\big).$$
where 
$\omega_P$ denotes also the Poincar{\'e} metric on $\D$ and 
$$M_R:= \int_{\D_R} \log \frac{1}{|\zeta|}\omega_P=\int_{\zeta\in \D_R} \log \frac{1}{|\zeta|} \frac{2}{(1-|\zeta|^2)^2}
id\zeta\wedge d\overline\zeta.$$
So, $m_{a,R}$ is a probability measure  which depends on $a,R$ but 
does not depend on the choice of $\phi_a$. 
Recall the following  geometric Birkhoff ergodic  theorem. 

\begin{theorem}\label{th_birkhoff}{\rm (Dinh-Nguyen-Sibony \cite{DinhNguyenSibony1})}
 Under the  above  hypothesis and notation,
then for almost every point $a\in X$ with respect to the measure
$m_P$, the measure $m_{a,R}$ defined above converges to $m_P$
when $R\to\infty$.  
\end{theorem}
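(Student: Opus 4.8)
The goal is to show that for $m_P$-almost every $a\in M\setminus E$ the probability measures $m_{a,R}$ converge weakly to $m_P$ as $R\to\infty$. The plan follows the strategy of \cite{DinhNguyenSibony1}: first prove a pointwise ergodic theorem for the leafwise heat diffusion, then transfer it to the geometric averages $m_{a,R}$ after recognising $m_{a,R}$ as a truncated resolvent of the heat semigroup. To begin, I would record what the standing hypotheses give: hyperbolicity of the foliation makes the leafwise Poincar\'e metric $\omega_P$ a well-defined leafwise-smooth positive $(1,1)$-form on $M\setminus E$, and together with the hyperbolicity of the singularities and the absence of invariant analytic curves this guarantees (see \cite{DinhNguyenSibony1}) that $m_P=T\wedge\omega_P$ is a \emph{finite} positive measure, normalised here to a probability measure. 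Since $M$ is compact and metrizable, $m_{a,R}\to m_P$ weakly is equivalent to $\int\varphi\,dm_{a,R}\to\int\varphi\,dm_P$ for every $\varphi$ in a fixed countable family dense in $C(M)$; by a diagonal argument it suffices to prove, for each such $\varphi$ — which may be taken leafwise smooth with support in $M\setminus E$ — that $\int\varphi\,dm_{a,R}\to\int\varphi\,dm_P$ for $m_P$-almost every $a$.

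Next I would set up the leafwise heat flow. Let $\Delta_P$ be the leafwise Laplace--Beltrami operator of $\omega_P$, let $(D_t)_{t\ge0}$ be the associated leafwise heat semigroup and $p_t(x,y)$ its leafwise heat kernel. Because $T$ is harmonic, $m_P$ is a harmonic measure, equivalently it is invariant under every $D_t$ (a stationary measure for the foliated Brownian motion); because $T$ is extremal among directed harmonic currents, $m_P$ is \emph{ergodic} for $(D_t)$. A Dunford--Schwartz/Hopf maximal inequality for the positive $L^1(m_P)$-contraction semigroup $(D_t)$, combined with ergodicity, yields the pointwise ergodic theorem: $\tfrac1S\int_0^S (D_t\varphi)(a)\,dt\to\int\varphi\,dm_P$ for $m_P$-almost every $a$ as $S\to\infty$.

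The core step is to compare $\int\varphi\,dm_{a,R}$ with these heat averages. Fix a universal covering $\phi_a:\D\to L_a$ with $\phi_a(0)=a$; then $(\D,\omega_P)$ is the hyperbolic plane. In complex dimension one the Dirichlet Green function is conformally invariant, so $\log(1/|\zeta|)$ is the Green function of $\D$ with pole at $0$, while $\log(s_R/|\zeta|)=\int_0^\infty p^{\D_R}_t(0,\zeta)\,dt$ is that of the sub-disc $\D_R=\{|\zeta|<s_R\}$ with Dirichlet boundary conditions, the two differing on $\D_R$ by the constant $\log(1/s_R)\to0$. Hence, writing $\tau_R$ for the first exit time of $\D_R$ under Brownian motion $(B_s)$ on $\D$ started at $0$ (equivalently, on $L_a$ started at $a$ via $\phi_a$), Fubini gives $\int_{\D_R}\varphi(\phi_a(\zeta))\log(s_R/|\zeta|)\,\omega_P(\zeta)=\mathbb{E}_a\big[\int_0^{\tau_R}\varphi(B_s)\,ds\big]$, and the same identity with $\varphi\equiv1$ shows $M_R$ and $\mathbb{E}_a[\tau_R]$ differ by $O(1)$, so their ratio tends to $1$ and $M_R\to\infty$ (indeed $M_R\asymp R$). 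Thus $\int\varphi\,dm_{a,R}$ equals, up to $o(1)$, the ratio $\mathbb{E}_a[\int_0^{\tau_R}\varphi(B_s)\,ds]/\mathbb{E}_a[\int_0^{\tau_R}ds]$, and the Chac\'on--Ornstein ratio ergodic theorem for the Markov semigroup generated by $\Delta_P$ (with invariant ergodic measure $m_P$), applied along $\tau_R\uparrow\infty$, identifies this ratio with $\int\varphi\,dm_P$ for $m_P$-almost every $a$. Together with the reduction above this proves the theorem; alternatively one bypasses the ratio theorem and transfers the convergence of $\tfrac1S\int_0^S D_t\varphi\,dt$ to the geometric averages by a Tauberian comparison, exploiting that both sides are monotone in their ``time'' parameter.

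The hard part is precisely this last transfer: one must control, uniformly enough to keep almost-everywhere (not merely mean) convergence, the discrepancy between averaging $\varphi$ along a leaf over the \emph{geometric} Poincar\'e ball $\D_R$ with the explicit weight $\log(1/|\zeta|)$ and the \emph{probabilistic} heat-semigroup average. Two issues require care: passing from the universal cover $\D$ to the leaf $L_a$, where the Green function becomes a sum $\sum_\gamma\log(1/|\gamma\zeta|)$ over the deck group and one must check that this sum is compatible with the truncation to $\D_R$; and showing that the part of $\D_R$ near $\partial\D_R$, where the weight $\log(1/|\zeta|)$ is small, contributes only $o(M_R)$, which rests on $M_R\asymp R$ and the transience of hyperbolic Brownian motion. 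A further technical point is the maximal inequality underpinning the pointwise heat ergodic theorem in the presence of the singular set $E$: one localises away from $E$, using the finiteness of $m_P$ together with Theorem~\ref{thm_main_global}, which ensures that $T$, hence $m_P$, carries no excess mass at the singularities.
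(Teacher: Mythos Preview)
The paper does not give its own proof of this statement: Theorem~\ref{th_birkhoff} is quoted verbatim as a result of Dinh--Nguy\^en--Sibony \cite{DinhNguyenSibony1} and is used as a black box in the application section. So there is nothing in the present paper to compare your argument against.

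That said, your sketch is a fair outline of the strategy actually carried out in \cite{DinhNguyenSibony1}: one sets up the leafwise heat semigroup with respect to $\omega_P$, shows that $m_P=T\wedge\omega_P$ is a finite invariant (harmonic) measure which is ergodic when $T$ is extremal, proves a pointwise ergodic theorem for the heat diffusion, and then identifies the geometric averages $m_{a,R}$ with the diffusion averages via the Green function/occupation-time representation. The delicate points you flag (passage from $\D$ to the leaf, boundary contribution near $\partial\D_R$, maximal inequality) are precisely where the work in \cite{DinhNguyenSibony1} lies, and your sketch does not resolve them, only names them; as a proof it is incomplete, though as a roadmap it is accurate.

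One correction: in your last paragraph you invoke Theorem~\ref{thm_main_global} to control the mass of $m_P$ near $E$. This is out of place. In \cite{DinhNguyenSibony1} the finiteness of $m_P$ and the maximal estimates near the singularities are obtained directly from the linearizable (here hyperbolic) nature of the singular points and the Poincar\'e-metric estimate recorded here as Lemma~\ref{lem_poincare}; the vanishing of Lelong numbers is neither used nor available at that stage. Since in the present paper Theorem~\ref{th_birkhoff} is an input to Theorem~\ref{thm_visibility}, which in turn relies on Theorem~\ref{thm_main_global}, inserting Theorem~\ref{thm_main_global} into a proof of Theorem~\ref{th_birkhoff} would also muddy the logical flow, even if no actual circularity arises.
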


The  above theorem gives the  following  connection between $N(a,x,r)$ and $m_P.$

\begin{lemma}\label{lem_relation_N_vs_m_P} For $m_P$-almost  every $a\in M\setminus E$ and for all $x\in M,$   the  $\limsup$ in Definition \ref{defi_visibility} is  in fact  a  true limit. 
Moreover, if $m_P(\partial B(x,r))=0,$ then 
$$  
N(a,x,r)= \lim_{R\to \infty} m_{a,R}(B(x,r))= m_P(B(x,r)) .
$$
\end{lemma}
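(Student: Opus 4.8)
The plan is to realize $m_{a,R}(B(x,r))$ as a weighted time-average along the leaf, to show that this weighting is asymptotically trivial, and then to feed the geometric Birkhoff theorem (Theorem~\ref{th_birkhoff}) into the resulting comparison. \emph{Step 1 (hyperbolic polar coordinates).} I would parametrize $\D$ by $\zeta=s_te^{2\pi i\theta}$, $(t,\theta)\in[0,\infty)\times[0,1)$, where $s_t=\tanh(t/2)$ is the Euclidean radius with $s_t\D=\D_t$. A direct computation in these coordinates gives $\omega_P=2\pi\sinh(t)\,dt\wedge d\theta$ and $\log(1/|\zeta|)=-\log\tanh(t/2)$. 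Hence, writing
$$w(t):=-2\pi\sinh(t)\log\tanh(t/2),\qquad f(t):=\int_0^1\mathbf 1_{B(x,r)}\big(\phi_a(s_te^{2\pi i\theta})\big)\,d\theta\in[0,1],$$
one obtains $M_R=\int_0^R w(t)\,dt$ and
$$m_{a,R}(B(x,r))=\frac{1}{M_R}\int_0^R f(t)\,w(t)\,dt,\qquad\text{while}\qquad N(a,x,r)=\limsup_{R\to\infty}\frac1R\int_0^R f(t)\,dt.$$

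\emph{Step 2 (the weight is asymptotically constant).} From $\tanh(t/2)=(1-e^{-t})/(1+e^{-t})$ one gets $-\log\tanh(t/2)=2e^{-t}+O(e^{-3t})$, hence $w(t)=2\pi+O(e^{-2t})$ as $t\to\infty$. In particular $C_0:=\int_0^\infty|w(t)-2\pi|\,dt<\infty$, so $|M_R-2\pi R|\le C_0$, and since $0\le f\le 1$ also $\big|\int_0^R f\,w\,dt-2\pi\int_0^R f\,dt\big|\le C_0$. Combining these two bounds and using $0\le m_{a,R}(B(x,r))\le 1$ yields
$$\Big|\,m_{a,R}(B(x,r))-\tfrac1R\int_0^R f(t)\,dt\,\Big|\le\frac{C_0}{\pi R}\xrightarrow[R\to\infty]{}0 .$$
Thus the two sequences have the same $\liminf$ and $\limsup$; in particular $N(a,x,r)=\limsup_{R\to\infty}m_{a,R}(B(x,r))$, and the $\limsup$ in Definition~\ref{defi_visibility} is a genuine limit exactly when $m_{a,R}(B(x,r))$ converges.

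\emph{Step 3 (Birkhoff and portmanteau).} By Theorem~\ref{th_birkhoff}, for $m_P$-almost every $a\in M\setminus E$ one has $m_{a,R}\to m_P$ weakly as $R\to\infty$. When $m_P(\partial B(x,r))=0$, the portmanteau theorem gives $m_{a,R}(B(x,r))\to m_P(B(x,r))$, and Step~2 then produces $N(a,x,r)=m_P(B(x,r))$ with a true limit. For the (at most countably many, for each fixed $x$) remaining radii one squeezes $f$ between the analogous functions for radii $r'<r<r''$ whose boundary spheres are $m_P$-null, lets $R\to\infty$ and then $r'\uparrow r$, $r''\downarrow r$, and invokes monotone convergence of $m_P$ on the nested balls; this pins the limit down once one knows $m_P$ carries no mass on $\partial B(x,r)$, which follows from the local description (\ref{eq_local_description_intro}) of $T$, since there $m_P=T\wedge\omega_P$ is absolutely continuous along $\nu$-a.e.\ plaque with locally integrable density while $\partial B(x,r)$ meets such a plaque in a set of vanishing area.

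The main obstacle is Steps~1--2, i.e.\ recognizing that the $\log(1/|\zeta|)$-weight built into the normalized measures $m_{a,R}$ becomes, in hyperbolic time, a constant up to an $L^1$ error, so that the Birkhoff-type convergence of $m_{a,R}$ proved in \cite{DinhNguyenSibony1} transfers verbatim to the bare angular Cesàro averages defining the visibility $N(a,x,r)$. Once this comparison is in place, the remaining content is just the portmanteau theorem.
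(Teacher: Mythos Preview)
Your argument is correct and follows essentially the same route as the paper: both reduce the comparison between $N(a,x,r)$ and $m_{a,R}(B(x,r))$ to the observation that the radial weight $w(t)=2\pi\sinh(t)\log(1/s_t)$ equals $2\pi$ up to an integrable error (the paper phrases this as $|l_t(2\pi)^{-1}\log(1/s_t)-1|\approx e^{-t}$, which is the same quantity), then invoke Theorem~\ref{th_birkhoff} and pass from continuous test functions to $\mathbf 1_{B(x,r)}$ via monotone approximation under the hypothesis $m_P(\partial B(x,r))=0$. Your presentation is a bit more explicit in the polar-coordinate bookkeeping, and you add the observation (absent from the paper, which only asserts that at most countably many radii can fail) that in fact $m_P(\partial B(x,r))=0$ for \emph{every} $r$, since a plaque, being a holomorphic disc on which $\|y\|^2$ is strictly subharmonic, cannot sit inside a Euclidean sphere; this cleanly disposes of the first sentence of the lemma for all $r$ simultaneously.
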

Notice that there  is  a number $r_0>0$ small enough such that for every
$x\in M$ and every  $0<r<r_0,$ we have that   $m_P(\partial B(x,r))=0.$ 
\begin{proof}
Let $l_R$ be the length  in the Poincar\'e metric  of the  circle $\partial \D_R.$
For   a     continuous  test  function $\varphi$  on $M,$  
 we have that
$$ \frac{1}{R}\int\limits_0^R \Big( \int_{\theta=0}^1  \varphi \big (\phi_a(s_te^{2\pi i\theta})\big)d\theta\Big) dt = \frac{1}{R}\int\limits_0^R 
 \Big (  \int_{ (\phi_a)_* [\partial\D_t]  } {\varphi\cdot d\sqrt{\omega_P}\over l_t}\Big) dt,$$
 where  $d\sqrt{\omega_P}$ is the  length element associated to the metric $\omega_P.$
Moreover, using the polar coordinates,   we   get that
\begin{multline*} \Big|\frac{1}{R}\int\limits_0^R 
 \Big (  \int_{ (\phi_a)_* [\partial\D_t]  } {\varphi\cdot  d\sqrt{\omega_P}\over l_t}\Big) dt -
\int\varphi\cdot \frac{1}{ 2\pi R}(\phi_a)_* \big( (\log\frac{1}{|\zeta|}\omega_P)|_{\D_R}\big)\Big |\\
\leq
{\|\varphi\|_\infty\over R}\int_{t=0}^R  |l_t (2\pi)^{-1}\log (1/s_t)-1|  dt.
\end{multline*}
Since  $|l_t (2\pi)^{-1}\log (1/s_t)-1|\approx  e^{-t},$  the right hand side   tends to $0$ as $R\to\infty.$

Next, a direct  computation shows that  $|M_R-2\pi R|$ is  bounded by  a  constant.
Consequently,
$$
\int  \varphi\cdot \frac{1}{ 2\pi R}(\phi_a)_* \big( (\log \frac{1}{|\zeta|}\omega_P)|_{\D_R}\big) -  \int_M  \varphi m_{a,R}
$$
tends to $0$ as $R\to\infty.$
On the other hand, we  infer from  Theorem \ref{th_birkhoff}  that
$\lim_{R\to \infty} \int\varphi m_{a,R}= \int\varphi m_P $ for $m_P$-almost  every $a\in M\setminus E.$
Putting   these  estimates altogether, we   obtain that
$$ \lim_{R\to \infty}\frac{1}{R}\int\limits_0^R \Big( \int_{\theta=0}^1   \varphi\big (\phi_a(s_te^{2\pi i\theta})\big)d\theta\Big) dt = \int_M\varphi m_P .$$
Writing    $\textbf{1}_{B(x,r)}$ (resp.    $\textbf{1}_{\overline{B}(x,r)}$)  as  the  
limit of an  increasing (resp. decreasing)   sequence of  continuous  test functions $\varphi$ and using that  $m_P(\partial B(x,r))=0,$
the lemma follows from  the last equality.
\end{proof}

 For simplicity  we still denote by   $\omega$  the  Hermitian  metric on leaves of the foliation $(M\setminus E,\Fc)$  induced by  the ambient Hermitian metric  $\omega.$
  Consider the  function $\eta:\ M\setminus E\to [0,\infty]$ defined by
  $$
  \eta(x):=\sup\left\lbrace \|D\phi(0)\| :\ \phi:\ \D\to L_x\ \text{holomorphic such that}\ \phi(0)=x  \right\rbrace.
  $$
  Here, for the  norm of the  differential $D\phi$ we use the Poincar\'e metric on $\D$ and the Hermitian metric
  $\omega$ on $L_x.$
   We obtain the following relation  between  $\omega$   and  the Poincar\'e metric $\omega_P$ on leaves
\begin{equation}
\label{eq_relation_Poincare_Hermitian_metrics}
\omega=\eta^2 \omega_P.
\end{equation}

We record here  the  following   precise estimate on the  function $\eta.$  
\begin{lemma} \label{lem_poincare}
We keep  the above  hypotheses
and notation. Then there exists  a  constant $c>1$  such that  $\eta \leq c$ on $M$, $\eta\geq c^{-1}$ outside the singular flow boxes $\cup_{x\in E}{1\over 4}\U_x$ and 
$$c^{-1} s \lof s \leq\eta(x)  \leq c  s \lof s$$
for $x\in M\setminus E$  and $s:=\dist(x,E).$ Here $\lof(\cdot):=1+|\log(\cdot)|$ is a log-type function,
and ${1\over 4}\U_x:= (1/4\D)^2$ for $\U_x=\D^2.$ 
\end{lemma}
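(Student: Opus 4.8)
The plan is to treat the ``regular'' region (bounded away from the singularities) and the neighbourhoods of the singular points separately, the latter being the real content. Away from $E$ the bound $\eta\approx 1$ will come from compactness and the plaque structure; near a singular point one reduces to the linear model and the sector parametrisation of \S\ref{Main_estimates}, and the estimate $\eta(x)\approx s\,\lof s$ will be read off from a short computation of the Poincar\'e density of the sector, the lower bound being immediate and the upper bound being the delicate point.

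\textbf{Step 1: the regular estimates.} First, $\eta$ is bounded on $M\setminus E$: otherwise, from a sequence of universal coverings $\phi_{x_n}\colon\D\to L_{x_n}$ with $\|D\phi_{x_n}(0)\|\to\infty$ one extracts, after a Brody reparametrisation, a non-constant entire curve in $M$ tangent to $\Fc$; such a curve either lies in a single leaf, contradicting hyperbolicity of the leaves, or passes through a singular point, which is impossible since near a linearizable hyperbolic singularity every leaf (each of the two separatrices, a disc, and each sector-leaf) is hyperbolic and carries no germ of entire curve. (Alternatively one cites \cite{FornaessSibony2,DinhNguyenSibony1}.) This gives $\eta\le c$, equivalently $\omega_P\ge c^{-2}\omega$ on every leaf by \eqref{eq_relation_Poincare_Hermitian_metrics}. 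Secondly, for $x$ outside $\bigcup_{x'\in E}\frac14\U_{x'}$ the leaf $L_x$ contains a plaque through $x$ which, by compactness of $M\setminus\bigcup\frac18\U_{x'}$ together with the explicit linear model inside the singular boxes, is a holomorphic graph over a coordinate disc of radius bounded below with slope bounded above; reparametrising it by $\D$ produces a holomorphic map $\D\to L_x$ sending $0$ to $x$ with derivative $\ge c^{-1}$, so $\eta(x)\ge c^{-1}$. Since there $\dist(x,E)$ is bounded below, both remaining estimates hold trivially for such $x$, and from now on I may assume $x$ lies in a singular flow box with $\|x\|$ small.

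\textbf{Step 2: the local model and the lower bound near $E$.} Fix a singular flow box $\U_{x_0}\cong\D^2$; rescaling and flipping coordinates as in \S\ref{Main_estimates} I may assume $\mu=1$, $\lambda=a+ib$ with $b>0$, so that the sector $S_\lambda$, the exponent $\gamma>1$ and the parametrisations $\psi_\alpha$ are available. For $x\in\D^2$ with $\|x\|$ small, the component through $x$ of $L_x\cap\D^2$ is biholomorphic to a half-plane: a punctured disc $\D^*\cong\H$ if $x$ lies on a separatrix, and the sector $\psi_\alpha(S_\lambda)\cong\H$ (via $\tau\mapsto\tau^\gamma$) if $x=\psi_\alpha(\zeta_0)$, $\zeta_0=u_0+iv_0$, lies on a generic leaf. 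Writing $m_0:=\min\{v_0,bu_0+av_0\}=|\log\|x\||+O(1)$ and $M_0:=\max\{v_0,bu_0+av_0\}$, one has $\|\psi_\alpha'(\zeta_0)\|=\sqrt{|z|^2+|\lambda w|^2}\approx e^{-m_0}\approx s$, while the Poincar\'e density of $S_\lambda$ at $\zeta_0$ equals $\gamma/\bigl(|\tau_0|\sin(\gamma\arg\tau_0)\bigr)$, which by the trigonometric estimates already used in the proof of Lemma \ref{lem_Poisson_kernel}(1) (namely $|\tau_0|\approx M_0$ and $\sin(\gamma\arg\tau_0)\approx m_0/M_0$) is $\approx 1/m_0\approx 1/\lof s$. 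Since $\psi_\alpha$ identifies the leafwise Hermitian and Poincar\'e metrics of the local leaf with $\|\psi_\alpha'\|^2$ times the Euclidean metric and with the intrinsic Poincar\'e metric of $S_\lambda$ respectively, this yields $\eta_{L_x\cap\D^2}(x)\approx s\,\lof s$; and because the Poincar\'e metric only decreases under the inclusion $L_x\cap\D^2\hookrightarrow L_x$ (equivalently $\eta$ only increases), we get at once $\eta(x)\ge c^{-1}s\,\lof s$. The separatrix case is identical with $S_\lambda$ replaced by $\H$, $\|\psi'\|\approx|z_0|$ and density $\approx 1/|\log|z_0||$.

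\textbf{Step 3: the upper bound near $E$ --- the main point.} Let $\phi\colon\D\to L_x$ be any holomorphic map with $\phi(0)=x$, and let $\D'\subset\D$ be the connected component of $0$ in $\phi^{-1}(L_x\cap\D^2)$; then $\phi(\D')$ lies in the local leaf $\cong S_\lambda$, the branch of the sector coordinate is single-valued on the simply connected $\D'$, and $g:=\zeta\circ\phi|_{\D'}\colon\D'\to S_\lambda$ is holomorphic with $\phi|_{\D'}=\psi_\alpha\circ g$. The crucial geometric fact is that $\D'$ contains a Poincar\'e disc of \emph{definite} radius about $0$: any path in $\D$ joining $0$ to $\D\setminus\D'$ is mapped by $\phi$ to a path that leaves the local leaf $L_x\cap\D^2$, hence crosses $\partial(L_x\cap\D^2)$, whose points are at Hermitian distance $\gtrsim 1-\|x\|$ from $x$; since $\phi$ is a contraction from $(\D,\omega_P)$ to $(L_x,\omega_P)$ and, by Step 1, $\omega_P\ge c^{-2}\omega$ on $L_x$ (so leafwise Poincar\'e distances dominate $c^{-1}$ times Hermitian, hence $c^{-1}$ times ambient distances), such a path has length $\ge c^{-1}\dist_M(x,\partial\D^2)\gtrsim 1$. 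Therefore $\omega_P^{\D'}(0)\le C\,\omega_P^{\D}(0)$. Now $\|D\phi|_{\D'}(0)\|_{\omega_P^{\D'},\omega}=\|\psi_\alpha'(\zeta_0)\|_{\mathrm{eucl},\omega}\cdot\|Dg(0)\|_{\omega_P^{\D'},\mathrm{eucl}}$, and since $g\colon\D'\to S_\lambda$ is holomorphic between domains, Schwarz gives $\|Dg(0)\|_{\omega_P^{\D'},\omega_P^{S_\lambda}}\le 1$, hence $\|Dg(0)\|_{\omega_P^{\D'},\mathrm{eucl}}\le 1/\rho_{S_\lambda}(\zeta_0)\approx m_0$; combining, $\|D\phi(0)\|_{\omega_P^{\D},\omega}\le C\,\|D\phi|_{\D'}(0)\|_{\omega_P^{\D'},\omega}\le C\,e^{-m_0}m_0\approx s\,\lof s$. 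Taking the supremum over $\phi$ gives $\eta(x)\le c\,s\,\lof s$, and with Steps 1--2 this completes the proof (the separatrix case being handled in the same way, with $S_\lambda$ replaced by $\H$).

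\textbf{Where the difficulty lies.} The main obstacle is Step 3: the global leaf $L_x$ can be far larger than the local sector, so a priori its Poincar\'e metric at $x$ could be much smaller than that of $S_\lambda$, and in fact infinitely many sheets of $L_x$ do cluster near $x$ in $\C^2$ along the separatrices. What rescues the estimate is that the comparison must be made in the leafwise Poincar\'e distance, not in $\C^2$: the pieces of $L_x$ lying \emph{outside} the flow box sit at a definite Poincar\'e distance from the deep point $x$ --- which is exactly the content of the a priori bound $\eta\le c$ --- while the extra nearby sheets belong to $L_x\cap\D^2$ and are therefore \emph{not} removed when one passes to the component $\D'$ of the preimage of the whole local leaf.
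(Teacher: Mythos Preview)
Your argument is correct and complete, but it takes a markedly different route from the paper's own proof. The paper does not argue directly at all: it first observes that the absence of nonconstant entire curves tangent to $\Fc$ (which is your Brody argument in Step~1) makes the foliation Brody hyperbolic in the sense of \cite{DinhNguyenSibony3}, and then simply invokes Proposition~3.3 of \cite{DinhNguyenSibony3}, where the precise two-sided estimate $\eta(x)\approx s\,\lof s$ is established in that generality. So in the paper the lemma is a two-line citation, whereas you reprove the cited proposition from scratch using the explicit sector model of \S\ref{Main_estimates}.

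What your approach buys is transparency: Step~2 makes explicit that the local leaf is biholomorphic to $S_\lambda$ with Poincar\'e density $\gamma/(|\tau|\sin(\gamma\arg\tau))\approx 1/m_0$, and Step~3 isolates the genuine content of the upper bound, namely that the already-established global estimate $\eta\le c$ forces any holomorphic disc $\phi:\D\to L_x$ to stay inside the singular flow box on a Poincar\'e ball of definite radius, so that the Schwarz lemma against $S_\lambda$ applies with only a bounded loss. This is presumably close to how the cited proposition is proved, but it is instructive to see it tied to the concrete estimates of Lemma~\ref{lem_Poisson_kernel}(1).

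One small inaccuracy worth noting: in Step~3 you call $\D'$ simply connected, but a connected component of $\phi^{-1}(\D^2)$ need not be. This is harmless, since $\psi_\alpha:S_\lambda\to P$ is a genuine biholomorphism (injectivity follows from $\lambda\notin\R$), so $g=\psi_\alpha^{-1}\circ\phi|_{\D'}$ is globally well defined without any appeal to simple connectivity; you can simply delete that clause.
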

\begin{proof}
Since  there exists no  holomorphic non-constant map
$\C\rightarrow M$  such that out of  $E$ the image of $\C$ is  locally contained in leaves,
it follows from 
 \cite[Theorem 15]{FornaessSibony2} 
that there is  a constant  $c>0$  such that
$\eta(x) \leq  c$ 
for all $x\in M\setminus E.$ In other  words,
 the   foliation   is   {\it Brody hyperbolic} 
following  the terminology of  our  joint-work with Dinh and Sibony   \cite{DinhNguyenSibony3}.
 Therefore,  the lemma is  a  direct consequence  of  Proposition 3.3 in \cite{DinhNguyenSibony3}.
\end{proof}

Now  we arrive at the
\\
\noindent{\bf End of the proof of  Theorem  \ref{thm_visibility}.}    
   Let $x\in M$ be  a point. Consider two cases.
\\
\noindent  {\bf Case 1:} $x\not\in E.$

Let $\U$ be a regular flow box with transversal $\T$ which   contains $x.$
By  (\ref{eq_local_description_intro})   we can write in $\U$ 
$$
T=\int h_\alpha[P_\alpha] d\nu(\alpha),
$$
where, for $\nu$-almost  every $\alpha\in \T,$   $h_\alpha$ denotes the positive harmonic function associated to the current $T$ on the plaque  $P_\alpha. $ On the other hand,  since  $\U$  is  away  from  the set of singularities $E,$
we deduce  from  Lemma \ref{lem_poincare} that  $c^{-1}\leq  \eta(y)\leq  c$ for $y\in\U.$
 Using this and (\ref{eq_relation_Poincare_Hermitian_metrics}) and the above  expression for $T,$
we  infer easily that 
 $$
 m_P(y)=(T\wedge \omega_P)(y)= \eta(y) (T\wedge \omega)(y)\approx  (T\wedge \omega)(y)\approx  T\wedge i\ddbar \| y\|^2,\qquad y\in\U.  
 $$
 This, combined  with  formula (\ref{eq_Lelong}), implies that
 $$
 \lim_{r\to 0} r^{-2}m_P(B(x,r))\leq  \lim_{r\to 0} cr^{-2} \int_{B(x,r)} T\wedge i\ddbar \| y\|^2 =c \mathcal L(T,x).
 $$
  By  Theorem \ref{thm_FS}, $\mathcal L (T,x)=0.$ 
 On the other hand, we know from 
Lemma  \ref{lem_relation_N_vs_m_P}  that
$$  
\lim_{r\to 0} r^{-2} N(a,x,r)=  \lim_{r\to 0} r^{-2} m_P(B(x,r))
$$
for $m_P$-almost  every $a\in M\setminus E.$ 
 Putting the last three  estimates together, we  obtain that   $ 
 N(a,x,r)= 
 o( r^2).$
 
\noindent  {\bf Case 2:} $x\in E.$

  Fix a (local) holomorphic coordinates system of $M$ on a singular  flow box $\U_x$ of $x$  such that
$(\U_x,x)$ is identified with  $(\D^2,0)$ and 
 the
leaves of $(M,\Fc,E)$ are integral curves of the  linear vector field
$\Phi(z,w) = \mu z {\partial\over \partial z}
+ \lambda w{\partial\over \partial w}$  with some nonzero complex numbers $\lambda,\mu$   
  such that $\lambda/\mu\not\in \R.$  
  
  Suppose without loss of generality that the metric $\omega$ coincides with the standard 
 metric $i\ddbar \| y\|^2$ on $\D^2.$
 Next, recall from 
(\ref{eq_relation_Poincare_Hermitian_metrics}) that
$$
i\ddbar \|y\|^2=\eta^2(y) g_P(y)\approx  \|y\|^2 (\log\|y\|)^2 g_P(y)\quad\text{for}\ 0<\|y\|<1/2.
$$
where  the  estimate $\approx$  holds by  Lemma  \ref{lem_poincare}.  
Therefore, we infer that 
$$m_P(y):= (T\wedge  \omega_P)(y)\approx  { T\wedge i\ddbar \|y\|^2\over  \|y\|^2 (\log\|y\|)^2}\quad\text{on}\  B_{1/2}.$$ 
Consequently, for $0<r<1/2,$
$$
\int_{B_r}  m_P(y)\approx \int_{B_r}{ T\wedge i\ddbar \|y\|^2\over  \|y\|^2 (\log\|y\|)^2}
=\int_0^r {F'(s)ds \over  s^2 (\log s)^2},
$$
where the last equality follows from (\ref{eq_F}). So Case  2 will follow if  we can 
show that $\int_0^r {F'(s)ds \over  s^2 (\log s)^2}=o(|\log r|^{-1})
$  as $r\to 0.$
Since we know  from (\ref{eq_F})-(\ref{eq_G}) that $(s^2G(s))'=F'(s),$ performing an integration by part to the last expression yields that
\begin{multline*}
\int_0^r {F'(s)ds \over  s^2 (\log s)^2}=\int_0^r  { d(s^2G(s)) \over  s^2 (\log s)^2}\\
 =\left \lbrack { G(s) \over   (\log s)^2}\right\rbrack^r_0
+2 \int_0^r  { G(s)ds \over s (\log s)^2}+2\int_0^r  { G(s)ds \over s (\log s)^3}.
\end{multline*}
On the  other hand, we know  from (\ref{eq_F})-(\ref{eq_G}) that $G(r)$ decreases, as $r\searrow 0,$ to $\mathcal L(T,x),$
which is  equal to $0$ by  Theorem \ref{thm_main_global}.
 Therefore,  a  straightforward  computation shows that all three terms on the right hand  side
of  the last line  is  of order  
$  o(|\log r|^{-1})
$  as $r\to 0,$  as  desired.  
 This  completes the proof of  Case 2, and hence  the theorem is  proved.
      \hfill $\square$
  
\begin{remark}\rm
We conclude  the article  with some  remarks and open questions.

\noindent 1) It seems  to be of interest to  investigate   the Main Theorem in the case  where the singularity $0$ is  only
linearizable (see \cite{DinhNguyenSibony1}).

\noindent 2) A natural question  arises   whether     the main results of this  article can be  generalized  in higher  dimensions.
We postpone this issue  to a  forthcoming work.

\noindent 3) Now let $(M,\Fc,E)$    be  
a      hyperbolic foliation   with     the set of  singularities $E$ in a   Hermitian compact complex  manifold $(M,\omega)$
of arbitrary dimension. 
 Assume that all the singularities  are linearizable. Using  the finiteness of the Lelong number  of a positive harmonic current
 \cite{Skoda}, and applying \cite{DinhNguyenSibony1}   and  arguing as in the end  of  the proof
 of Theorem \ref{thm_visibility},  we can show  the following  weak form of   this theorem (but in
higher dimension).
 For $m_P$-almost  every  point $a\in M\setminus E$
 and for every point $x\in M,$  we have that
 $$
 N(a,x,r)=\begin{cases}
 O( r^2), & x\in M\setminus E;\\
 O(|\log r|^{-1}), & x\in E.
 \end{cases}
 $$
  \end{remark}  

\small

\noindent
Vi{\^e}t-Anh Nguy{\^e}n,  
Universit\'e de Lille 1, 
Laboratoire de math\'ematiques Paul Painlev\'e, 
CNRS U.M.R. 8524,  
59655 Villeneuve d'Ascq Cedex, 
France.\\
{\tt Viet-Anh.Nguyen@math.univ-lille1.fr},
{\tt http://www.math.u-psud.fr/$\sim$vietanh}


\begin{thebibliography}{99}
 \addcontentsline{toc}{section}{References}

 
\bibitem{BerndtssonSibony}  
Berndtsson, Bo; Sibony, Nessim. The $\overline\partial$-equation on a positive current.
{\it Invent. Math.} {\bf 147} (2002), no. 2, 371-428. 
    

\bibitem{DinhNguyenSibony1}
Dinh, T.-C.; Nguy\^en, V.-A.; Sibony, N.  Heat equation and ergodic theorems for Riemann surface laminations.
{\it Math. Ann.} {\bf 354}, (2012), no. 1,   331-376.

 \bibitem{DinhNguyenSibony2}
Dinh, T.-C.; Nguy\^en, V.-A.; Sibony, N.  Entropy for hyperbolic Riemann surface laminations I.
{\it Frontiers in Complex Dynamics: a volume in honor of John Milnor's 80th birthday,} (A. Bonifant, M. Lyubich, S. Sutherland, editors), (2012), Princeton University Press, 24 pages.

 \bibitem{DinhNguyenSibony3}
Dinh, T.-C.; Nguy\^en, V.-A.; Sibony, N.  Entropy for hyperbolic Riemann surface laminations II.
{\it Frontiers in Complex Dynamics: a volume in honor of John Milnor's 80th birthday,} (A. Bonifant, M. Lyubich, S. Sutherland, editors), (2012), Princeton University Press, 29 pages. 

 \bibitem{DinhSibony} Dinh, Tien-Cuong; Sibony, Nessim. Unique ergodicity for foliations in $\P^2$ with an invariant curve.
  {\tt math.CV, math.DS,  arXiv:1509.07711,} 28 pages.
 
\bibitem{FornaessSibony1}
 Forn\ae ss, John Erik; Sibony, Nessim.   Harmonic currents of finite
 energy and laminations. {\it Geom. Funct. Anal.}, {\bf  15} (2005), no. 5, 962-1003.
 
  
\bibitem{FornaessSibony2}
  Forn\ae ss, John Erik; Sibony, Nessim.   Riemann surface laminations with singularities.
{\it J. Geom. Anal.}, {\bf 18} (2008), no. 2, 400-442.

 
 
 
\bibitem{FornaessSibony3}
  Forn\ae ss, John Erik; Sibony, Nessim. Unique ergodicity of harmonic currents on singular foliations
 of $\P^2.$ {\it Geom. Funct. Anal.}, {\bf 19} (2010), no. 5,  1334-1377.
 
 
\bibitem{Glutsyuk} 
Glutsyuk, A.A.  Hyperbolicity of the leaves of a generic one-dimensional holomorphic foliation on a nonsingular projective algebraic variety. (Russian)
 {\it Tr. Mat. Inst. Steklova}, {\bf 213} (1997), Differ. Uravn. s Veshchestv. i Kompleks. Vrem., 90-111; translation in {\it Proc. Steklov Inst. Math.} 1996, no. 2, {\bf 213}, 83-103. 

 

\bibitem{Neto}
Lins Neto A.  Uniformization and the Poincar\'e metric on the leaves of a foliation by curves.
{\it  Bol. Soc. Brasil. Mat. (N.S.),}  {\bf 31} (2000), no. 3, 351-366. 

 
 
  
\bibitem{NetoSoares}
Lins Neto, A.; Soares M. G.
Algebraic solutions of one-dimensional foliations.
{\it J. Differential Geom.,}  {\bf 43} (1996), no. 3, 652-673. 
  
  \bibitem{Nguyen}
  Nguyen, Viet-Anh. 
 Singular holomorphic foliations by curves I: Integrability of holonomy cocycle in dimension 2.  
 {\tt math.DS, math.CV, math.DG arXiv:1403.7688,} 73 pages.

   
   \bibitem{Skoda}
  Skoda, Henri. Prolongement des courants, positifs, ferm\'es de masse finie. (French) [Extension of closed, positive currents of finite mass] {\it Invent. Math.} {\bf 66} (1982), no. 3, 361–376.   
 

 \end{thebibliography}
\end{document}